\newtheorem{theorem}{Theorem}
\newtheorem{remark}{Remark}
\newtheorem{lemma}{Lemma}
\newtheorem{corollary}{Corollary}
\newtheorem{example}{Example}
\begin{document}
\title{Monotone function estimator and its application}
\author{Yunyi Zhang Dimitris N. Politis Jiazheng Liu and Zexin Pan}
\maketitle
\abstract{In this paper, the model $Y_i=g(Z_i),\ i=1,2,...,n$ with $Z_i$ being random variables with known distribution and $g(x)$ being unknown strictly increasing function is proposed and almost sure convergence of estimator for $g(x)$ is proved for i.i.d and short range dependent data. Confidence intervals and bands are constructed for i.i.d data theoretically and confidence intervals are introduced for short range dependent data through resampling. Besides, a test for equivalence of $g(x)$ to the desired function is proposed. Finite sample analysis and application of this model on an urban waste water treatment plant's data is demonstrated as well.}
\section{Introduction and assumptions}
\subsection{Introduction}
In this article, we focus on model
\begin{equation}
Y_i=g(Z_i), i=1,2,...,n
\label{Equation_He}
\end{equation}
and we try to estimate strictly increasing function $g(x)$ (we call it transfer function) for given $x$ and random variable $Z_i,\ i=1,2,..,n$ whose distribution are known under some constraints. We first provide some examples to clarify the motivation to estimate transfer function $g(x)$.
\begin{example}
\label{Exam_1}
Suppose there is a production line and we want to control the quality of products and minimize the cost of materials at the same time. It is reasonable to assume that the quality of products, $Y$ is an decreasing function $g$ of property of materials, $\vert Z-z_0\vert$ with $z_0$ being the design point. Moreover, the distribution of quality of materials can assume to be known. (For example, tensile strength of materials satisfies Weibull distribution \cite{Trustrum1979}.) However, it is difficult to use regression model since testing materials' quality is of great cost and always brings damage to materials. Instead, if the distribution of quality of materials is known, then distribution of $\vert Z-z_0\vert$ can be calculated and model \ref{Equation_He} can be applied. After estimating $g$, we know how sensitive the quality of materials makes influence on products.
\end{example}
\begin{example}
\label{Exam_2}
Consider the model in figure \ref{Amplif}. Suppose the probability distribution of input signal is known and the output signal data can be acquired. Then, two things are worth considering. The first one is to understand how the amplifier enlarges the input signal, that is, to estimate the transfer function $g(x)$. The second thing is to test whether the transfer function $g$ coincides with the expected transfer function $h$, which comes from physical laws or experience. For example, according to \cite{Second}, measured concentration $Y_i,\ i=1,2,...$ can be modelled as
\begin{equation}
Y_i=\mu\exp(a+b\times Z_i),\ i=1,2,...
\end{equation}
With $\mu$ the true concentration and $a,b$ unknown constant, $Z_i$ being standard normal random variable (but cannot be observed in measurement). Then, researchers having these data may be willing to justify the correctness of this model.
\begin{figure}
  \centering
  \includegraphics[width=9cm]{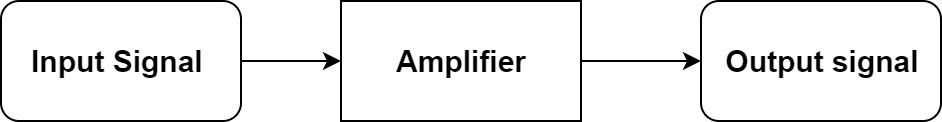}\\
  \caption{A standard amplifier system}\label{Amplif}
\end{figure}
\end{example}
\begin{example}
\label{Exam_3}
In the third example, we consider a type of time series data
\begin{equation}
\begin{aligned}
Z_n=\Sigma_{k=1}^{\infty}a_k\epsilon_{n-k},\ \epsilon\sim i.i.d\\
Y_n=g(Z_n)
\end{aligned}
\end{equation}
Here $Z_n$ is known and we want to estimate $g(x)$ for some $x$. For example, in \cite{doi:10.1002/0470011815.b2a12003}, daily number of respiratory symptoms per child is recorded and is related to daily $SO_2$ and $NO_2$. In that paper, transfer function $g(x)=v_0\log(x)+\epsilon$ with $\epsilon$ being an ARIMA series are considered. If instead, we ignore the errors $\epsilon$ and want to estimate transfer function in a non-parametric way, then model \ref{Equation_He} can be applied to this problem.
\end{example}
To summarize, example \ref{Exam_1} and \ref{Exam_3} involves estimating transfer function $g(x)$ in i.i.d data and dependent data, and example \ref{Exam_2} involves testing equivalence of transfer function. All of these three topics will be covered in this paper.

According to \cite{10.2307/2236692}, suppose $Z$ is a random variable with cumulative distribution $F_Z$, then $F_Z(Z)$ is of uniform distribution. Thus, random variables $Z$ with strictly increasing cumulative distribution function can be naturally related to a random variable $U$ with uniform distribution by choosing $g(x)=F_Z^{-1}(x)$. There is lots of discussion on estimating $F_Z(x)$ and $F_Z^{-1}(x)$, and related results can be found in \cite{10.2307/2958864} and \cite{wu2005}. However, there are few papers discussing model \ref{Equation_He}. We suppose the transfer function $g(x)$ in model \ref{Equation_He} is strictly increasing and random variables $Z_i,\ i=1,2,...$, have increasing distribution function in this paper. Under some constraints, we provide a method to estimate $g(x)$ and to construct point-wise confidence interval for i.i.d data and short-range dependent data. We also discuss how to construct confidence bands for i.i.d data and provide a goodness of fit test on equivalence of $g(x)$ and the expected transfer function $h(x)$.

In section 2, we will demonstrate how to estimate transfer function and construct confidence intervals and bands for i.i.d data. We also provide a test similar to Kolmogorov-Smironv test\cite{Testing_Statistical} on testing whether $g(x)=h(x)$, the expected function. In section 3, we discuss how to estimate transfer function and how to construct confidence interval through sub-sampling methods for short-range dependent data. In section 4, several numerical examples are provided and conclusion is made in the last section.
\subsection{Frequently used notations and assumptions}
In this part, we introduce frequently used notations for this article, other symbols will be defined when being used. We will also list basic assumptions and constraints on random variables and transfer function below.

Suppose that $Z_i,\ i=1,2,...,n$ are random variables with known cumulative distribution function $F_Z(x)$ and density $f_Z(x)$, $Y_i$ being unknown random variables satisfying $Y_i=g(Z_i)\ \forall i$. We define empirical distribution function as
\begin{equation}
F_n^K(x)=\frac{1}{n}\Sigma_{i=1}^n\mathbf{1}_{K_i\leq x},K=Y,Z
\end{equation}
Quantile and sample quantile function as \begin{equation} \xi^K(p)=\inf\left\{x|F_K(x)\geq p\right\},\ \xi_n^K(p)=\inf\left\{x|F_n^K(x)\geq p\right\} \end{equation}

Assumption A1: $Z_i$ are i.i.d with strictly increasing cumulative distribution function.

Assumption A2: $Z_i$ are causal stationary linear short range dependent processes (details can be seen in \cite{wu2005}). That is, $Z_k=\Sigma_{i=0}^{\infty}a_i\epsilon_{k-i}$ with $\epsilon_i$ being i.i.d. random variables and satisfy
\begin{equation}
\sup_{x\in\mathbf{R}}f_{\epsilon}(x)+\vert f^{'}_\epsilon(x)\vert<\infty
\end{equation}
Here $f_\epsilon$ is density of innovation $\epsilon$. Moreover, suppose $\exists \alpha>0$, $q>2$ such that $\mathbf{E}\vert\epsilon_k\vert^\alpha<\infty$ and
\begin{equation}
\Sigma_{i=n}^\infty\vert a_i\vert^{\min(\alpha/q,1)}=O((\log n)^{-1/q})
\end{equation}

Assumption A3: $Z_i$ satisfies $\alpha-$ mixing condition (Details can be seen at \cite{subsampling} and \cite{Strong_Mixing})

Assumption B1: $g$ is strictly monotonic increasing (but does not have to be positive, so for decreasing $g$, $h=-g$ is increasing)

Assumption B2: $g$ is differentiable

Assumption B3: $g$ is twice continuous differentiable, $f_Z$ is continuous differentiable on $(a,b)$ defined in table \ref{Notation_sym}. Moreover, we assume that $\exists \gamma>0$ such that
\begin{equation}
\sup_{a<x<b} \frac{F_Z(x)(1-F_Z(x))}{f_Z^2(x)}\vert f_Z^{'}(x)-f_Z(x)\frac{g^{''}(x)}{g^{'}(x)}\vert\leq \gamma
\end{equation}
Notice that this equation implies that $g^{'}(x), f_Z(x)>0$ on $(a,b)$, correspondingly $f_Y(x)>0,\ x\in[g(a),g(b)]$.
\begin{table}
\caption{Frequently used notations}
\begin{tabular}{l l}
\hline\hline
Notation & Meaning\\
\hline
$F_Z(x),\ f_Z(x)$ & Cumulative distribution and density of known random variable $Z$\\
\hline
 $F_Y(x),\ f_Y(x)$ & Cumulative distribution and density of unknown random variable $Y$\\
 \hline $f_n^Y(x)$ & Estimated density of unknown random variable\\
 \hline $\xi^K(p), K=Y,Z$ & $p$th quantile function of distribution $F_K$\\
 \hline $\xi_n^K(p), K=Y,Z$ & $p$th sample quantile function of random variable $K$ \\
 \hline $g(x)$ & Transfer function satisfying $Y_i=g(Z_i),\ i=1,2,...n$\\
 \hline $\widehat{g}(x)$ & Estimated transfer function at x\\
 \hline $F_n^K(x)$ & Empirical distribution function of random variable $K=Y,Z$\\
 \hline $a,b$ & Here, $-\infty\leq a=\sup\left\{x|F_Z(x)=0\right\}$, $\infty\geq b=\inf\left\{x|F_Z(x)=1\right\}$\\
 \hline $\mathbf{1}_{K\in A},K=Y,Z$ & If $K\in A$, then function is equal to 1 and 0 otherwise\\
 \hline\hline
 \end{tabular}
 \label{Notation_sym}
 \end{table}
\section{Estimation of transfer function with i.i.d data}
In this section, we discuss estimation and test of transfer function on i.i.d data, including estimation, construction of confidence intervals and confidence bands. Based on Kolmogorov-Smirnov test, we provide a test on whether the transfer function is equal to the desired one and discuss performance of test under an alternative. First we provide two lemma.
\begin{lemma}
\label{lemma_trans}
Assume random variable $Y,Z$ satisfy $Y=g(Z)$ and g satisfies B1, with the notation in table \ref{Notation_sym}, then we have
\begin{equation}
F_Y(g(x))=F_Z(x),\forall x\in[a,b]
\end{equation}
\end{lemma}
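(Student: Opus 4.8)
The plan is to exploit the strict monotonicity of $g$ (Assumption B1) so that the event $\{Y \leq g(x)\}$ can be rewritten in terms of $Z$. Since $Y = g(Z)$ and $g$ is strictly increasing on $[a,b]$, for any fixed $x \in [a,b]$ we have the equivalence of events $\{g(Z) \leq g(x)\} = \{Z \leq x\}$: the forward direction is immediate because $Z > x$ would force $g(Z) > g(x)$ by strict monotonicity, and the reverse direction is just monotonicity itself. Taking probabilities of both sides gives $F_Y(g(x)) = \mathbf{P}(Y \leq g(x)) = \mathbf{P}(g(Z) \leq g(x)) = \mathbf{P}(Z \leq x) = F_Z(x)$, which is the claim.

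First I would fix $x \in [a,b]$ and write $F_Y(g(x)) = \mathbf{P}(Y \leq g(x))$ by definition of the cumulative distribution function. Next I would substitute $Y = g(Z)$ to get $\mathbf{P}(g(Z) \leq g(x))$. Then I would argue the set equality $\{g(Z) \leq g(x)\} = \{Z \leq x\}$ using that $g$ is a strictly increasing function: $z \mapsto g(z)$ preserves $\leq$, and because the inequality is strict for strict ordering of arguments, $g(z) \leq g(x)$ cannot hold when $z > x$. Finally I would take probabilities and read off $F_Z(x)$.

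Honestly, there is no real obstacle here; this is essentially a one-line consequence of monotonicity and the definition of the pushforward distribution. The only point deserving a moment's care is the behavior at the endpoints $a$ and $b$ and the handling of possible flat spots or jumps — but Assumption A1 (for the i.i.d.\ case) gives a strictly increasing $F_Z$, and more importantly the argument above never needs continuity or invertibility of $g$ beyond strict monotonicity, so it goes through verbatim for every $x \in [a,b]$ including the endpoints (interpreting $g(a), g(b)$ as the appropriate limits if $a$ or $b$ is infinite, in which case the identity holds trivially with both sides equal to $0$ or $1$). I would simply remark that strict monotonicity is exactly what converts the event on $Y$ into the matching event on $Z$, and that no regularity beyond B1 is used.
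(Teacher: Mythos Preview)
Your proposal is correct and follows exactly the paper's own argument: the paper's proof is the single chain $F_Y(g(x))=P(Y\leq g(x))=P(g(Z)\leq g(x))=P(Z\leq x)=F_Z(x)$, justified by strict monotonicity of $g$. Your write-up merely expands the justification of the step $\{g(Z)\leq g(x)\}=\{Z\leq x\}$ and adds remarks about the endpoints, but the approach is identical.
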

\begin{proof}
Because $g$ is strictly increasing, we have $F_Y(g(x))=P(Y\leq g(x))=P(g(Z)\leq g(x))=P(Z\leq x)=F_Z(x)$ and the lemma is proved
\end{proof}
\begin{lemma}
\label{lm2} Assume B1, random variable $Y=g(Z)$, then we have, $\forall p\in(0,1)$
\begin{equation} \xi^Y(p)=g(\xi^Z(p)),\ \xi_n^Y(p)=g(\xi_n^Z(p))
\end{equation}
\end{lemma}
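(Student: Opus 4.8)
The plan is to prove the two identities separately, starting with the population quantile identity $\xi^Y(p)=g(\xi^Z(p))$ and then mimicking the argument for the sample version. First I would recall Lemma \ref{lemma_trans}, which gives $F_Y(g(x))=F_Z(x)$ on $[a,b]$, and the definition $\xi^Y(p)=\inf\{y\mid F_Y(y)\geq p\}$. The natural approach is to show that the set $\{y\mid F_Y(y)\geq p\}$ equals the image $g(\{x\mid F_Z(x)\geq p\})$, at least insofar as their infima agree, and then invoke monotonicity and continuity of $g$ to pull the infimum through $g$, i.e. $\inf g(S) = g(\inf S)$ when $g$ is increasing and continuous on an interval containing $S$.

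The key steps, in order, are as follows. (1) Fix $p\in(0,1)$ and set $x_0=\xi^Z(p)$; note $x_0\in[a,b]$. (2) Using Lemma \ref{lemma_trans}, for any $x$ with $a\le x\le b$ we have $F_Y(g(x))\ge p \iff F_Z(x)\ge p$, so $g(x_0)$ is a candidate value with $F_Y(g(x_0))=F_Z(x_0)\ge p$ (the latter by right-continuity of $F_Z$ and the definition of the infimum), giving $\xi^Y(p)\le g(x_0)$. (3) Conversely, for any $y<g(x_0)$, strict monotonicity of $g$ forces $y=g(x)$ for some $x<x_0$ or $y$ below the range of $g$; in the first case $F_Y(y)=F_Z(x)<p$ by definition of $x_0$ as the infimum, and in the second case $F_Y(y)=0<p$. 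Hence no $y<g(x_0)$ satisfies $F_Y(y)\ge p$, so $\xi^Y(p)\ge g(x_0)$. Combining (2) and (3) yields $\xi^Y(p)=g(\xi^Z(p))$. (4) For the sample version, replace $F_Y,F_Z$ by $F_n^Y,F_n^Z$ and observe that $Y_i=g(Z_i)$ together with strict monotonicity of $g$ gives $\mathbf{1}_{Y_i\le g(x)}=\mathbf{1}_{Z_i\le x}$ for every $x$, hence $F_n^Y(g(x))=F_n^Z(x)$ — the exact empirical analogue of Lemma \ref{lemma_trans}. The identical infimum argument then gives $\xi_n^Y(p)=g(\xi_n^Z(p))$.

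The main obstacle I anticipate is step (3): care is needed at the boundary of the range of $g$ and with values $y$ that are not in the image of $g$ (since $g$ is only assumed strictly increasing, not surjective onto $\mathbf{R}$, nor necessarily continuous under B1 alone). One must argue that for $y<g(x_0)$ either $y\in g((a,x_0))$ — where $F_Y(y)<p$ directly — or $y$ lies below all of $g((a,b))$, where $F_Y(y)=0$. A clean way to handle this uniformly is to note that $F_Y(y)=P(Y\le y)=P(g(Z)\le y)$, and $\{g(Z)\le y\}\subseteq\{Z\le x\}$ whenever $g(x)>y$; taking $x\uparrow x_0$ and using $F_Z(x)<p$ for $x<x_0$ gives $F_Y(y)\le \sup_{x<x_0}F_Z(x)\le p$, and in fact strict inequality follows since $F_Z(x_0^-)\le p$ while we need to exclude equality — here right-continuity and the definition of $\xi^Z$ as an infimum do the work. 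For the sample quantile the situation is simpler because $F_n^Y, F_n^Z$ are step functions with finitely many jumps, so the infimum is attained and the argument reduces to the bijection $\{Y_i\le g(x)\}=\{Z_i\le x\}$ between the underlying events.
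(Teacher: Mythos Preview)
Your step (2) and step (4) coincide with the paper's argument: the bound $\xi^Y(p)\le g(\xi^Z(p))$ via $F_Y(g(\xi^Z(p)))=F_Z(\xi^Z(p))\ge p$, and the observation that $F_n^Y,F_n^Z$ are themselves right-continuous distribution functions satisfying $F_n^Y(g(x))=F_n^Z(x)$, so the population argument transfers verbatim.

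The difference is in the reverse inequality. Instead of your direct case analysis in step (3), the paper uses a one-line symmetry trick: since $g^{-1}$ is also strictly increasing and $Z=g^{-1}(Y)$, the already-established direction applied with the roles of $Y,Z$ swapped gives $\xi^Z(p)\le g^{-1}(\xi^Y(p))$, and applying $g$ yields $g(\xi^Z(p))\le \xi^Y(p)$. This avoids your worry about values $y$ outside the range of $g$ entirely---at the cost of tacitly assuming $\xi^Y(p)$ lies in that range so that $g^{-1}(\xi^Y(p))$ makes sense. Your hands-on route makes the issue visible; the paper's route hides it. In fact, under B1 alone (strict monotonicity without continuity) both arguments share the same small gap: if $g$ jumps at $x_0=\xi^Z(p)$ one can have $\xi^Y(p)<g(x_0)$, so your attempt to extract strict inequality $F_Y(y)<p$ from $F_Y(y)\le F_Z(x_0^-)\le p$ cannot succeed in general. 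This is immaterial for the paper, since every invocation of the lemma occurs under B2 or B3, where $g$ is continuous and your step (3) (or the paper's inverse-function step) goes through cleanly.
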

\begin{proof}
From definition, on one hand, $F_Y(g(\xi^Z(p)))=F_Z(\xi^Z(p))\geq p$, this is because $F_Z$ is right continuous. Therefore, $\xi^Y(p)\leq g(\xi^Z(p))$. On the other hand, since $g$ is strictly increasing, its inverse function $g^{-1}(y)$ is strictly increasing. Therefore we have $\xi^Z(p)\leq g^{-1}(\xi^Y(p))\Rightarrow g(\xi^Z(p))\leq \xi^Y(p)$, and the first part is proved. For the second part, we notice that $F_n^K(x), K=Y,Z$ are also a right continuous cumulative distribution functions and thus the discussion above can be directly applied to $\xi_n^Y(p),\xi_n^Z(p)$, and the second part is proved.
\end{proof}
We now start estimation of transfer function $g(x)$.
\subsection{Estimation of transfer function}
\begin{theorem}
Suppose A1 and B1, and for $\forall x\in(a,b)$ being given, define $\widehat{g}(x)=\xi_n^Y(F_Z(x))$. Then we have
\begin{equation}
\widehat{g}(x)\to_{a.s.} g(x),\ n\to\infty
\end{equation}
Moreover, for $\alpha\in(0,1/2)$ being given, we suppose $\zeta(y)$ being quantile function of standard normal distribution, then we have \begin{equation}
\lim\inf_{n\to\infty} P(\xi_n^Y(c_1)\leq g(x)\leq \xi_n^Y(c_2))\geq 1-\alpha
\end{equation}
Here, $c_1=F_Z(x)+\frac{\zeta(\alpha/2)\sqrt{F_Z(x)(1-F_Z(x))}}{\sqrt{n}}$, $c_2=F_Z(x)+\frac{\zeta(1-\alpha/2)\sqrt{F_Z(x)(1-F_Z(x))}}{\sqrt{n}}$ \end{theorem}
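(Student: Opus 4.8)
The plan is to treat the two assertions separately, reducing the consistency statement to strong consistency of sample quantiles and the coverage statement to the central limit theorem for a binomial count, with Lemmas~\ref{lemma_trans} and~\ref{lm2} supplying the bridge between $Y$ and $Z$.

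For $\widehat g(x)\to_{a.s.}g(x)$, the first step is to identify the estimand. Since $F_Z$ is strictly increasing (A1), $\xi^Z(F_Z(x))=x$, so Lemma~\ref{lm2} gives $\xi^Y(F_Z(x))=g(\xi^Z(F_Z(x)))=g(x)$ (equivalently, by Lemma~\ref{lemma_trans}, $F_Z(x)$ is exactly the level of $F_Y$ at $g(x)$). Thus $\widehat g(x)=\xi_n^Y(F_Z(x))$ is the level-$F_Z(x)$ sample quantile of the i.i.d.\ sample $Y_1,\dots,Y_n$, whose population counterpart is $g(x)$, so the claim is exactly strong consistency of a sample quantile. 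Concretely I would fix $\epsilon>0$, apply the SLLN to $F_n^Y(g(x)-\epsilon)$ and $F_n^Y(g(x)+\epsilon)$, and use $F_Y(g(x)-\epsilon)<F_Z(x)<F_Y(g(x)+\epsilon)$ to get $g(x)-\epsilon\le\widehat g(x)\le g(x)+\epsilon$ for all large $n$, almost surely. A shorter route is to write $\widehat g(x)=g(\xi_n^Z(F_Z(x)))$ via Lemma~\ref{lm2}, note $\xi_n^Z(F_Z(x))\to_{a.s.}\xi^Z(F_Z(x))=x$, and pass to the limit inside $g$; both routes use only that $g$ is continuous at $x$, which (for increasing $g$) can fail on at most a countable set and, under B2, never.

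For the coverage statement, observe first that $\alpha<1/2$ forces $\zeta(\alpha/2)<0<\zeta(1-\alpha/2)$, so $c_1<F_Z(x)<c_2$ for all large $n$; since $\xi_n^Y$ is nondecreasing, the two failure events $\{\xi_n^Y(c_1)>g(x)\}$ and $\{\xi_n^Y(c_2)<g(x)\}$ are then disjoint, and it suffices to bound the asymptotic probability of each by $\alpha/2$. The elementary but essential identity is that, because $g$ is strictly increasing, $\{g(Z_i)\le g(x)\}=\{Z_i\le x\}$ and $\{g(Z_i)<g(x)\}=\{Z_i<x\}$, whence $F_n^Y(g(x))=\tfrac1n\sum_{i=1}^n\mathbf 1_{Z_i\le x}=:S_n/n$ with $S_n\sim\mathrm{Binomial}(n,p)$, $p:=F_Z(x)\in(0,1)$ (positivity and $<1$ because $x\in(a,b)$). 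Using the quantile/CDF inversion $\xi_n^Y(c)>y\Leftrightarrow F_n^Y(y)<c$ and the fact that $\sqrt n\,(c_1-p)=\zeta(\alpha/2)\sqrt{p(1-p)}$, the de Moivre--Laplace theorem gives $P(\xi_n^Y(c_1)>g(x))=P(S_n/n<c_1)=P\!\left(\tfrac{S_n-np}{\sqrt{np(1-p)}}<\zeta(\alpha/2)\right)\to\Phi(\zeta(\alpha/2))=\alpha/2$, writing $\Phi=\zeta^{-1}$ for the standard normal c.d.f. For the other end, $\xi_n^Y(c_2)<g(x)$ entails $F_n^Y(g(x)^-)=\tfrac1n\sum_i\mathbf 1_{Z_i<x}\ge c_2$, which is contained in $\{S_n/n\ge c_2\}$, so $P(\xi_n^Y(c_2)<g(x))\le P(S_n/n\ge c_2)\to 1-\Phi(\zeta(1-\alpha/2))=\alpha/2$. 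Combining via the disjointness noted above, $\liminf_{n}P(\xi_n^Y(c_1)\le g(x)\le\xi_n^Y(c_2))=1-\limsup_{n}\bigl[P(\xi_n^Y(c_1)>g(x))+P(\xi_n^Y(c_2)<g(x))\bigr]\ge 1-\alpha$.

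The SLLN and CLT inputs are routine; the step requiring genuine care is the accounting of strict versus non-strict inequalities and of one-sided limits $F_n^Y(g(x)^-)$ in the quantile/CDF inversion --- this is exactly why the upper tail is handled by an inclusion rather than an equality, and why the conclusion is stated as a $\liminf$. The only regularity beyond A1 and B1 that is silently used is continuity of $g$ at the point $x$, so that neither sample quantile can stall on a flat stretch of $F_Y$; this is automatic under B2 and fails for at most countably many $x$ in general.
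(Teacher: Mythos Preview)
Your proposal is correct and follows essentially the same route as the paper: for the consistency part you sandwich $\widehat g(x)$ using the SLLN applied to $F_n^Y$ at $g(x)\pm\epsilon$ (the paper phrases this via the indicator-sum criterion for a.s.\ convergence, but the content is identical), and for the coverage part you invert the sample quantile to the empirical c.d.f.\ and invoke the binomial CLT, exactly as the paper does. Your treatment is in fact a bit more careful than the paper's---you track the strict/non-strict inequalities in the quantile inversion (handling the upper tail via an inclusion rather than an equality), you note the disjointness of the two failure events, and you flag that continuity of $g$ at $x$ is implicitly used to get $F_Y(g(x)-\epsilon)<F_Z(x)<F_Y(g(x)+\epsilon)$, a point the paper glosses over under B1 alone.
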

\begin{proof}
For the 1st part, according to \cite{Probability_Stochastic},
\begin{equation} \widehat{g}(x)\to_{a.s.}g(x)\Leftrightarrow\Sigma_{n=1}^{\infty}\mathbf{1}_{\vert \widehat{g}(x)-g(x)\vert> \epsilon}<\infty\ \forall\epsilon>0
\end{equation} Since
\begin{equation}
\mathbf{1}_{\widehat{g}(x)-g(x)>\epsilon}\leq \mathbf{1}_{F_n^Y(\xi^Y(F_Z(x))+\epsilon)\leq F_Z(x)}
\end{equation}
From strong law of large number, we have $F_n^Y(\xi^Y(F_Z(x))+\epsilon)\to_{a.s.}F^Y(\xi^Y(F_Z(x))+\epsilon)>F_Z(x)$, thus
\begin{equation}
\Sigma_{i=1}^\infty \mathbf{1}_{F_n^Y(\xi^Y(F_Z(x))+\epsilon)\leq F_Z(x)}<\infty
\end{equation}
Also, similarly we can get that $\Sigma_{n=1}^\infty \mathbf{1}_{\widehat{g}(x)-g(x)<-\epsilon}<\infty$ and we prove the result.

For the second part, we prove that
\begin{equation}
\lim\sup P(\xi_n^Y(c_1)>g(x))\leq\alpha/2,\ \lim\sup P(\xi_n^Y(c_2)<g(x))\leq\alpha/2
\end{equation}
For $\xi_n^Y(c_1)>g(x)\Rightarrow F_n^Y(g(x))<c_1$, from central limit theorem, and lemma \ref{lemma_trans}, we have
\begin{equation}
\sqrt{n}(F_n^Y(g(x))-F_Z(x))\to_{D}N(0,F_Z(x)(1-F_Z(x)))
\end{equation} Thus,
\begin{equation}
\lim\sup P(\xi_n^Y(c_1)>g(x))\leq \lim P\left(\frac{\sqrt{n}(F_n^Y(g(x))-F_Z(x))}{\sqrt{F_Z(x)(1-F_Z(x))}}\leq \zeta(\alpha/2)\right)=\alpha/2 \end{equation}
Similarly, we have $\lim\sup P(\xi_n^Y(c_2)<g(x))\leq \alpha/2$ and the theorem is proved. Since $x$ is constraint, $c_2-c_1=O(1/\sqrt{n})$.

This result can be applied to construct point-wise confidence interval.
\end{proof}
We consider construction of confidence band in the next theorem.
\begin{theorem}
\label{The_2}
Suppose A1, B1, B3, and suppose $\delta_n=(25\log\log n)/n$, define $\phi(x)$ as a kernel function satisfying the following condition: 1) $\phi$ is of finite support, i.e. there exists a compact interval $[d_1,d_2]$ such that $supp\ \phi\subseteq[d_1,d_2]$. 2) $\phi$ is continuous differentiable on $[d_1,d_2]$. 3) $\int_{d_1}^{d_2}\phi(x)=1$. We define the estimated density $f_n^Y(x)$ as
\begin{equation}
f_n^Y(x)=\frac{1}{nh}\Sigma_{i=1}^{n}\phi(\frac{x-Y_i}{h})
\end{equation}
Here, $h=h(n)$ is a bandwidth satisfying $(\log\log n)^{1/2}h\to 0$ and $\frac{\sqrt{n}h^2}{\log\log n}\to\infty$. Also suppose that $[c,d]$ is a closed interval in $\mathbf{R}$ such that $a<c<d<b$. Then we can define a Kiefer process $K(y,n),0\leq y\leq 1$\cite{Strong_Approximation_Quantile} such that \begin{equation}
\sup_{c\leq x\leq d}\vert\sqrt{n}(\widehat{g}(x)-g(x))f_n^Y(\widehat{g}(x))-\frac{K(F_Z(x),n)}{\sqrt{n}}\vert\to 0\ a.s.
\end{equation}
\end{theorem}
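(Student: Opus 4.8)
The statement is a strong-approximation result: it asserts that the normalized estimation error $\sqrt{n}(\widehat g(x)-g(x))f_n^Y(\widehat g(x))$ is, uniformly over $[c,d]$, within $o(1)$ a.s. of a time-changed Kiefer process evaluated at $F_Z(x)$. The natural route is to push everything back to the uniform empirical/quantile process via the probability-integral transform and then invoke a Komlós–Major–Tusnády / Csörgő–Révész type strong approximation for the quantile process by a Kiefer process. Concretely, I would proceed as follows.

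First, reduce to the quantile process of $Y$. By Lemma~\ref{lm2}, $\widehat g(x)=\xi_n^Y(F_Z(x))$ and $g(x)=\xi^Y(F_Z(x))$ (the latter from Lemma~\ref{lemma_trans}, since $F_Z$ has no flat parts on $(a,b)$), so $\widehat g(x)-g(x)=\xi_n^Y(p)-\xi^Y(p)$ with $p=F_Z(x)$. As $x$ ranges over $[c,d]$, $p$ ranges over $[F_Z(c),F_Z(d)]=:[p_1,p_2]\subset(0,1)$, a compact subinterval. Thus it suffices to show
\begin{equation}
\sup_{p_1\le p\le p_2}\Bigl|\sqrt n\,(\xi_n^Y(p)-\xi^Y(p))\,f_n^Y(\xi_n^Y(p))-\tfrac{1}{\sqrt n}K(p,n)\Bigr|\to 0\quad a.s.
\end{equation}
Second, invoke the classical strong approximation of the quantile process. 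Under B3 (which gives $f_Y$ continuously differentiable and bounded away from $0$ on $[g(c),g(d)]$, together with the Csörgő–Révész-type regularity condition built into the $\gamma$-bound), there is a Kiefer process $K(p,n)$ with
\begin{equation}
\sup_{p_1\le p\le p_2}\Bigl|\sqrt n\,f_Y(\xi^Y(p))\,(\xi_n^Y(p)-\xi^Y(p))-\tfrac{1}{\sqrt n}K(p,n)\Bigr|=O\!\Bigl(\tfrac{(\log n)(\log\log n)}{\sqrt n}\cdot\text{something}\Bigr)\to 0\ a.s.,
\end{equation}
via the Bahadur–Kiefer representation plus the KMT embedding; here the choice $\delta_n=(25\log\log n)/n$ controls the near-boundary behavior so that the sup over $[p_1,p_2]$ is unaffected. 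This is the step where the hypotheses B3 and the specific rate conditions are consumed.

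Third, replace the deterministic weight $f_Y(\xi^Y(p))$ by the estimated weight $f_n^Y(\xi_n^Y(p))=f_n^Y(\widehat g(x))$. Write the difference of the two normalized quantities as
\begin{equation}
\sqrt n\,(\xi_n^Y(p)-\xi^Y(p))\bigl(f_n^Y(\xi_n^Y(p))-f_Y(\xi^Y(p))\bigr),
\end{equation}
and bound $|f_n^Y(\xi_n^Y(p))-f_Y(\xi^Y(p))|\le |f_n^Y(\xi_n^Y(p))-f_Y(\xi_n^Y(p))|+|f_Y(\xi_n^Y(p))-f_Y(\xi^Y(p))|$. The first term is controlled by the uniform a.s. consistency rate of the kernel density estimator on a neighbourhood of $[g(c),g(d)]$ — by a standard kernel-deviation bound (e.g. a Bernstein/Komlós argument, or the Einmahl–Mason uniform LIL) this is $O\bigl(\sqrt{\log\log n/(nh)}+h\bigr)$ a.s.; the second is $O(|\xi_n^Y(p)-\xi^Y(p)|)=O(\sqrt{\log\log n/n})$ by the LIL for the quantile process and continuity of $f_Y'$. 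Since $\sqrt n\,|\xi_n^Y(p)-\xi^Y(p)|=O(\sqrt{\log\log n})$ a.s. uniformly, the whole cross term is $O\bigl(\sqrt{\log\log n}\,(\sqrt{\log\log n/(nh)}+h)\bigr)$, which tends to $0$ precisely under the stated bandwidth conditions $(\log\log n)^{1/2}h\to 0$ and $\sqrt n h^2/\log\log n\to\infty$. Combining the three steps and the triangle inequality gives the claim.

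**Main obstacle.** The delicate part is the uniform control of the kernel density estimator near the estimated (random) quantiles $\xi_n^Y(p)$ rather than at fixed points, and matching its a.s. rate against the $\sqrt{\log\log n}$ blow-up of the quantile deviations — this is exactly why both bandwidth conditions appear, and it requires a genuinely uniform a.s. deviation bound for $f_n^Y$ on an interval slightly larger than $[g(c),g(d)]$, not merely pointwise consistency. A secondary technical point is ensuring the Kiefer process supplied by the quantile strong-approximation theorem can be taken with argument $F_Z(x)$ (the paper's $K(F_Z(x),n)$); this is just the change of variable $p=F_Z(x)$ inside the same probability space, since $F_Z$ is a fixed continuous strictly increasing function, but it should be stated explicitly.
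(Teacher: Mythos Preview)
Your proposal is correct and follows the same overall architecture as the paper's proof: reduce to the $Y$-quantile process via $\widehat g(x)=\xi_n^Y(F_Z(x))$, invoke the Cs\"org\H{o}--R\'ev\'esz strong approximation of the quantile process by a Kiefer process (the paper first checks that B3 translates into the required regularity condition on $f_Y$, then cites the approximation to obtain $\sup|\,n(\widehat g(x)-g(x))f_Y(g(x))-K(F_Z(x),n)|=O_{a.s.}((n\log\log n)^{1/4}(\log n)^{1/2})$), and finally replace the unknown weight $f_Y(g(x))$ by the estimated weight $f_n^Y(\widehat g(x))$.

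The density-replacement step is where you and the paper diverge. You split $f_n^Y(\widehat g)-f_Y(g)$ as $[f_n^Y(\widehat g)-f_Y(\widehat g)]+[f_Y(\widehat g)-f_Y(g)]$ and appeal to an off-the-shelf uniform a.s.\ rate for the kernel density estimator on a slightly enlarged interval, together with Lipschitz continuity of $f_Y$. The paper instead splits it as $[f_n^Y(\widehat g)-f_n^Y(g)]+[f_n^Y(g)-f_Y(g)]$: the first bracket is controlled by the crude derivative bound $|(f_n^Y)'|\le \phi_m/h^2$, producing a term of order $\sqrt n\,\phi_m h^{-2}|\widehat g-g|^2=O(\log\log n/(\sqrt n\,h^2))$; the second bracket is handled by integrating the kernel by parts to write $f_n^Y(g(x))$ through $F_n^Y$ and then \emph{re-using} the Kiefer approximation for the empirical distribution function, giving $O_{a.s.}(h+(\log\log n)^{1/2}/(h\sqrt n))$. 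The paper's route is fully self-contained (no external KDE-deviation theorem is needed) and is precisely what generates the stated bandwidth condition $\sqrt n\,h^2/\log\log n\to\infty$ --- it comes from the $h^{-2}$ factor in the derivative bound. Your route is cleaner but outsources the hard work to an Einmahl--Mason type result, and in fact your cross-term estimate $O\bigl(\sqrt{\log\log n}\,(\sqrt{\log\log n/(nh)}+h)\bigr)$ would vanish under a strictly weaker bandwidth assumption than the one in the theorem; so your argument proves the statement but does not explain why the hypothesis is formulated as it is.
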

\begin{proof}
Because of B3, then according to \cite{Strong_Approximation_Quantile}, since $Y=g(Z), Z\in[a,b]$, and $g$ strictly increasing, then $Y\in[g(a),g(b)]$ and according to lemma \ref{lemma_trans}, we have $f_Y(g(x))g^{'}(x)=f_Z(x)$, $f_Y^{'}(g(x))g^{'}(x)^2+f_Y(g(x))g^{''}(x)=f_Z^{'}(x)$, thus suppose $z=g(x)$ and \begin{equation} \begin{aligned} \sup_{g(a)< z< g(b)}F_Y(z)(1-F_Y(z))\vert\frac{f_Y^{'}(z)}{f_Y^{2}(z)}\vert\\ =\sup_{a<x<b}\frac{F_Z(x)(1-F_Z(x))}{f_Z^2(x)}\vert f_Z^{'}(x)-f_Z(x)\frac{g^{''}(x)}{g^{'}(x)}\vert\leq\gamma \end{aligned} \end{equation} There exists a version of Kiefer process, such that \begin{equation} \sup_{\delta_n\leq F_Z(x)\leq 1-\delta_n}\vert n(\widehat{g}(x)-g(x))f_Y(g(x))-K(F_Z(x),n)\vert=_{a.s.}O((n\log\log n)^{1/4}(\log n)^{1/2}) \label{delta_n} \end{equation} For sufficiently large $n$, $\delta_n<F_Z(c)<F_Z(d)<1-\delta_n$ and the estimation above holds for $\forall x\in[c,d]$. On the other hand, for $\phi^{'}$ is continuous on its support $[d_1,d_2]$ and equal to 0 outside its support, define $\phi_m=\max_{x\in[d_1,d_2]}\vert\phi^{'}\vert$, from mean value theorem, we have, $\exists \eta\in \mathbf{R}$ such that \begin{equation} \vert f_n^Y(\widehat{g}(x))-f_n^Y(g(x))\vert=\vert f_n^Y(\eta)^{'}(\widehat{g}(x)-g(x)\vert\leq\frac{\phi_m}{h^2}\vert\widehat{g}(x)-g(x)\vert \end{equation} We next consider $f_n^Y(g(x))-f_Y(g(x))$. From integral transformation, we have \begin{equation} f_n^Y(g(x))-f_Y(g(x))=\frac{1}{h}\int_{d_1}^{d_2}F_n^Y(g(x)-hy)\phi^{'}(y)dy-f_Y(g(x)) \label{Prob_Int} \end{equation} From theorem A in \cite{Strong_Approximation_Quantile}, since $F_Y(Y_i)$ are uniform random variable, we pick $y=F_Y(g(x)-hz)$ in that theorem, suppose that $c\leq x\leq d$ and $h$ sufficiently small such that $g(a)<g(x)-hz,\ g(b)>g(x)-hz$ use lemma \ref{lemma_trans} and we have \begin{equation} \sup_{c\leq x\leq d}\vert n(F_n^Y(g(x)-hz)-F_Y(g(x)-hz))-K(F_Z(x),n)\vert=_{a.s.}O(\log^2 n) \end{equation} Therefore, for $n$ sufficiently large and $h<h_0$, $h_0$ sufficiently small, we have $g(a)<g(c)-h_0d_2<g(d)-h_0d_1<g(b)$ and since $f_Y(x)$ is continuous differentiable according to B3, its derivative  at $[g(c)-h_0d_2,g(d)-h_0d_1]$ is bounded, suppose $f_0=\max_{x\in [g(c)-h_0d_2,g(d)-h_0d_1]}\vert f_Y^{'}(x)\vert$. Therefore, equation \ref{Prob_Int} is equivalent as \begin{equation} \frac{1}{h}\int_{d_1}^{d_2}(F_n^Y(g(x)-hy)-F_Y(g(x)-hy))\phi^{'}(y)dy+\int_{d_1}^{d_2}\phi(y)(f_Y(g(x)-hy)-f_Y(g(x)))dy \end{equation} Moreover, from the law of iterated logarithm \cite{Strong_Approximation_Quantile}, we have \begin{equation} \lim\sup_{n\to\infty}\sup_{0\leq y\leq 1}\vert K(y,n)\vert/(2n\log\log n)^{1/2}=_{a.s.}1/2 \end{equation} Thus, from equation \ref{delta_n}, for sufficiently large $n$, \begin{equation} \sup_{x\in[c,d]}\vert \widehat{g}(x)-g(x)\vert\leq \sup_{x\in[c,d]}\vert \frac{K(F_Z(x),n)}{n\vert f_Y(g(x))\vert}\vert+O_{a.s.}(\frac{(\log\log n)^{1/4}(\log n)^{1/2}}{n^{3/4}})=O_{a.s.}(\frac{(\log\log n)^{1/2}}{\sqrt{n}}) \end{equation} since from assumption, $[c,d]$ is a closed interval and $\min_{x\in[c,d]}\vert f_Y(g(x))\vert>0$. Besides, we also have \begin{equation} \begin{aligned} \sup_{c\leq x\leq d}\vert f_n^Y(g(x))-f_Y(g(x))\vert\leq (\sup_{0\leq y\leq 1}\vert \frac{K(y,n)}{n}\vert+O(\frac{\log^2 n}{n}))\frac{\phi_m(d_2-d_1)}{h}+f_0h\int_{d_1}^{d_2}\vert y\phi(y)\vert dy\\ =_{a.s.}O(h+\frac{(\log\log n)^{1/2}}{h\sqrt{n}}) \end{aligned} \end{equation} To prove that theorem, from triangle inequality, \begin{equation} \begin{aligned} \sup_{c\leq x\leq d}\vert\sqrt{n}(\widehat{g}(x)-g(x))f_n^Y(\widehat{g}(x))-\frac{K(F_Z(x),n)}{\sqrt{n}}\vert\\ \leq \sup_{c\leq x\leq d}\vert\sqrt{n}(\widehat{g}(x)-g(x))f_Y(g(x))-\frac{K(F_Z(x),n)}{\sqrt{n}}\vert\\ +\sup_{c\leq x\leq d}\sqrt{n}\vert\widehat{g}(x)-g(x)\vert\vert f_n^Y(\widehat{g}(x))-f_n^Y(g(x))\vert\\ +\sup_{c\leq x\leq d}\sqrt{n}\vert\widehat{g}(x)-g(x)\vert\vert f_n^Y(g(x))-f_Y(g(x))\vert\\ \leq O_{a.s.}(\frac{(\log\log n)^{1/4}(\log n)^{1/2}}{n^{1/4}})+\sup_{c\leq x\leq d}\frac{\sqrt{n}\phi_m}{h^2}\vert\widehat{g}(x)-g(x)\vert^2+O_{a.s.}((\log\log n)^{1/2}h+\frac{(\log\log n)}{h\sqrt{n}})\\ =O_{a.s.}(\frac{(\log\log n)^{1/4}(\log n)^{1/2}}{n^{1/4}})+O_{a.s.}(\frac{\log\log n}{\sqrt{n}h^2})+O_{a.s.}((\log\log n)^{1/2}h+\frac{(\log\log n)}{h\sqrt{n}}) \end{aligned} \end{equation} Thus, let $(\log\log n)^{1/2}h\to 0$ and $\frac{\sqrt{n}h^2}{\log\log n}\to\infty$, we prove the result. \end{proof} \begin{remark} $\phi$ and $h(n)$ being defined on the theorem exists. For example, we can let \begin{equation} \phi(x)= \begin{cases} \frac{1}{2\pi}(1+\cos(x))\ x\in[-\pi,\pi]\\ 0\ others \end{cases} \end{equation} and let $h(n)=(1/n)^{1/6}$satisfies condition. \end{remark} \begin{corollary}[Confidence band within an interval] Suppose the same conditions in theorem \ref{The_2}, and suppose $c>0$ is a positive number, then we have \begin{equation} \lim_{n\to\infty}\sup P(\sup_{c\leq x\leq d}\vert\sqrt{n}(\widehat{g}(x)-g(x))f_n^Y(\widehat{g}(x))\vert>c)\leq P(\sup_{0\leq y\leq 1}\vert B(y)\vert>c)=\Sigma_{k\neq 0}(-1)^{k+1}\exp(-2k^2c^2) \end{equation} \end{corollary}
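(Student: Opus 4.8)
The plan is to feed the strong approximation of Theorem~\ref{The_2} into the scaling property of the Kiefer process and then invoke the classical Kolmogorov distribution. Write $W_n(x)=\sqrt{n}(\widehat{g}(x)-g(x))f_n^Y(\widehat{g}(x))$ and $V_n(x)=K(F_Z(x),n)/\sqrt{n}$, so that Theorem~\ref{The_2} states $R_n:=\sup_{c\le x\le d}|W_n(x)-V_n(x)|\to 0$ almost surely. By the triangle inequality for the sup-norm, $\bigl|\sup_{c\le x\le d}|W_n(x)|-\sup_{c\le x\le d}|V_n(x)|\bigr|\le R_n$, hence $\sup_{c\le x\le d}|W_n(x)|\le \sup_{c\le x\le d}|V_n(x)|+R_n$, which reduces the whole problem to the supremum of the more tractable process $V_n$ plus a negligible remainder.

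Next I would use that, for each fixed $n$, the map $y\mapsto K(y,n)/\sqrt{n}$ is a standard Brownian bridge $B$ on $[0,1]$ (its covariance is $s_1\wedge s_2-s_1s_2$, by the defining covariance of the Kiefer process). Since $x\mapsto F_Z(x)$ is nondecreasing and carries $[c,d]$ into $[F_Z(c),F_Z(d)]\subseteq[0,1]$, we get $\sup_{c\le x\le d}|V_n(x)|\stackrel{d}{=}\sup_{F_Z(c)\le y\le F_Z(d)}|B(y)|\le \sup_{0\le y\le 1}|B(y)|$, so $P(\sup_{c\le x\le d}|V_n(x)|>t)\le P(\sup_{0\le y\le 1}|B(y)|>t)$ for every $t>0$ and every $n$.

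Combining the two steps: for arbitrary $\varepsilon>0$, $P(\sup_{c\le x\le d}|W_n(x)|>c)\le P(\sup_{c\le x\le d}|V_n(x)|>c-\varepsilon)+P(R_n>\varepsilon)\le P(\sup_{0\le y\le 1}|B(y)|>c-\varepsilon)+P(R_n>\varepsilon)$. Letting $n\to\infty$ and using $R_n\to 0$ a.s. kills the last term, giving $\limsup_{n\to\infty}P(\sup_{c\le x\le d}|W_n(x)|>c)\le P(\sup_{0\le y\le 1}|B(y)|>c-\varepsilon)$. Finally I would let $\varepsilon\downarrow 0$; because $\sup_{0\le y\le 1}|B(y)|$ has a continuous distribution function (no atom at $c$), the right-hand side converges to $P(\sup_{0\le y\le 1}|B(y)|>c)$. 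The closed form $P(\sup_{0\le y\le 1}|B(y)|>c)=\sum_{k\neq 0}(-1)^{k+1}\exp(-2k^2c^2)=2\sum_{k\ge 1}(-1)^{k+1}\exp(-2k^2c^2)$ is the Kolmogorov formula, to be cited from \cite{Testing_Statistical}.

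The main obstacle is not any single estimate but the logical passage from an almost-sure approximation to a statement about limiting probabilities: the $\varepsilon$-splitting above is routine, but it only yields a clean conclusion once one observes that the law of $\sup_{0\le y\le 1}|B(y)|$ is atomless, which is what lets $\varepsilon$ be sent to $0$. It is also worth remarking explicitly (rather than proving in detail) why the result is an inequality for the $\limsup$ and not an equality: the supremum of the bridge is taken over the proper sub-interval $[F_Z(c),F_Z(d)]$ rather than all of $[0,1]$, and the density factor $f_n^Y(\widehat{g}(x))$ is only a consistent estimate of $f_Y(g(x))$ --- both effects are already absorbed into Theorem~\ref{The_2} and only cost us one-sided control.
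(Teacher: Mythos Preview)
Your proof is correct and follows essentially the same route as the paper: bound the supremum of $W_n$ by the supremum of the rescaled Kiefer process plus the a.s.\ vanishing remainder from Theorem~\ref{The_2}, identify $K(\cdot,n)/\sqrt{n}$ with a Brownian bridge, apply the $\varepsilon$-splitting, and finish by using that the Kolmogorov distribution has no atoms. The only cosmetic differences are that the paper enlarges directly to $\sup_{0\le y\le 1}|K(y,n)/\sqrt{n}|$ (skipping your intermediate step over $[F_Z(c),F_Z(d)]$) and verifies the atomlessness of $\sup_{0\le y\le 1}|B(y)|$ explicitly from the series formula rather than citing it.
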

\begin{proof}
Define $A_n=\left\{\sup_{c\leq x\leq d}\vert\sqrt{n}(\widehat{g}(x)-g(x))f_n^Y(\widehat{g}(x))\vert>c\right\}$, according to theorem \ref{The_2}, for \begin{equation}
\sup_{c\leq x\leq d}\vert\sqrt{n}(\widehat{g}(x)-g(x))f_n^Y(\widehat{g}(x))\vert\leq \sup_{0\leq y\leq 1}\vert\frac{K(y,n)}{\sqrt{n}}\vert+\sup_{c\leq x\leq d}\vert\sqrt{n}(\widehat{g}(x)-g(x))f_n^Y(\widehat{g}(x))-\frac{K(F_Z(x),n)}{\sqrt{n}}\vert \end{equation}
 And for $\forall\ \epsilon>0$ given, for sufficiently large $n$, $P(\sup_{c\leq x\leq d}\vert\sqrt{n}(\widehat{g}(x)-g(x))f_n^Y(\widehat{g}(x))-\frac{K(F_Z(x),n)}{\sqrt{n}}\vert<\epsilon)=1$. Therefore, $P(A_n)\leq P(\sup_{0\leq y\leq 1}\vert\frac{K(y,n)}{\sqrt{n}}\vert>c-\epsilon)$ for large $n$. According to \cite{Quantile_Process}, $K(y,n)/\sqrt{n}$ is a Brownian bridge and according to \cite{Quantile_Process_2}, $P(\sup_{0\leq y\leq 1}\vert B(y)\vert\leq c)=1-\Sigma_{k\neq 0}(-1)^{k+1}\exp(-2k^2c^2)$. Thus, from continuity of measure, \begin{equation}
 \lim\sup P(A_n)\leq\lim_{\epsilon\to 0}P(\sup_{0\leq y\leq 1}\vert\frac{K(y,n)}{\sqrt{n}}\vert>c-\epsilon)=P(\sup_{0\leq y\leq 1}\vert\frac{K(y,n)}{\sqrt{n}}\vert\geq c)
 \end{equation}
 The final thing is to prove that $P(\sup_{0\leq y\leq 1}\vert\frac{K(y,n)}{\sqrt{n}}\vert=c)=0$. From continuity of measure, for $c>0$, \begin{equation} \begin{aligned} P(\sup_{0\leq y\leq 1}\vert\frac{K(y,n)}{\sqrt{n}}\vert=c)=\lim_{n\to\infty} P(c-1/n<\sup_{0\leq y\leq 1}\vert\frac{K(y,n)}{\sqrt{n}}\vert\leq c+1/n)\\ \leq \lim_{n\to\infty}\Sigma_{k\neq 0}\exp(-2k^2(c-1/n)^2)-\exp(-2k^2(c+1/n)^2) \end{aligned} \end{equation} From mean value theorem, there exists $\eta_k\in[c-1/k,c+1/k]$, such that $\exp(-2k^2(c-1/n)^2)-\exp(-2k^2(c+1/n)^2)=-8\exp(-2k^2\eta_k^2)k^2\eta_k/n$, so $\Sigma_{k\neq 0}\exp(-2k^2(c-1/n)^2)-\exp(-2k^2(c+1/n)^2)=O(1/n)$ thus the result is proved and we can use this observation to construct confidence band in a closed interval.
 \label{observation}
 \end{proof}
 \subsection{Testing}
 In this section, we mainly consider testing $H_0: g=h$ versuses $H_1: g\neq h$. Here $h$ is a known or desired transfer function and $g$ is the underlying one. We consider the test
 \begin{equation}
 \sup_{\delta_n\leq F_Z(x)\leq 1-\delta_n}\sqrt{n}\frac{f_Z(x)}{h^{'}(x)}\vert \widehat{g}(x)-h(x)\vert\leq c
 \end{equation}
 for accepting $H_0$. Here $c$ is a positive constant and $\delta_n$ is the same as in theorem \ref{The_2}. We will discuss its behavior under the null and an alternative.

 \begin{theorem}
 Suppose A1, B1, B3. Consider testing $H_0: g(x)=h(x)\ \forall\ x\in(a,b)$ versus $H_1: \exists a<x<b$ such that $g(x)\neq h(x)$. Suppose $\delta_n$ is defined the same as in theorem \ref{The_2}. Then under the null hypothesis, we have, given $c>0$,
 \label{Test_Null}
 \begin{equation}
 P(\sup_{\delta_n\leq F_Z(x)\leq 1-\delta_n}\sqrt{n}\frac{f_Z(x)}{h^{'}(x)}\vert \widehat{g}(x)-h(x)\vert>c)\to P(\sup_{0\leq y\leq 1}\vert B(y)\vert>c)=\Sigma_{k\neq 0}(-1)^{k+1}\exp(-2k^2c^2)
 \end{equation}
 Here $B(y)$ is a Brownian bridge.
 \end{theorem}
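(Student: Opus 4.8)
The plan is to reduce the test statistic, under $H_0$, to the supremum of a rescaled Kiefer process and then to the supremum of a Brownian bridge, reusing the strong approximation already obtained inside the proof of Theorem~\ref{The_2}. First I would note that under $H_0$ we have $g\equiv h$, so $\widehat g(x)-h(x)=\widehat g(x)-g(x)$; moreover, differentiating the identity $F_Y(g(x))=F_Z(x)$ of Lemma~\ref{lemma_trans} yields $f_Y(g(x))g'(x)=f_Z(x)$, hence
\[
\frac{f_Z(x)}{h'(x)}=\frac{f_Z(x)}{g'(x)}=f_Y(g(x)),
\]
these quantities being positive on $(a,b)$ by B3. Thus the test statistic is exactly $T_n:=\sup_{\delta_n\le F_Z(x)\le 1-\delta_n}\sqrt n\, f_Y(g(x))\,|\widehat g(x)-g(x)|$, which is precisely the object controlled by equation~\eqref{delta_n}.

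Next I would invoke \eqref{delta_n}: under A1, B1, B3 there is a version of the Kiefer process $K(\cdot,n)$ with
\[
\sup_{\delta_n\le F_Z(x)\le 1-\delta_n}\bigl|\,n(\widehat g(x)-g(x))f_Y(g(x))-K(F_Z(x),n)\,\bigr|=_{a.s.}O\bigl((n\log\log n)^{1/4}(\log n)^{1/2}\bigr).
\]
Dividing by $\sqrt n$ and using $\bigl|\sup|a(\cdot)|-\sup|b(\cdot)|\bigr|\le\sup|a(\cdot)-b(\cdot)|$ gives
\[
\Bigl|\,T_n-\sup_{\delta_n\le F_Z(x)\le 1-\delta_n}\bigl|K(F_Z(x),n)/\sqrt n\bigr|\,\Bigr|=O_{a.s.}\Bigl(\tfrac{(\log\log n)^{1/4}(\log n)^{1/2}}{n^{1/4}}\Bigr)\longrightarrow 0 .
\]
Since $F_Z$ is continuous and strictly increasing on $(a,b)$ by B3, the set $\{x:\delta_n\le F_Z(x)\le 1-\delta_n\}$ is carried by $F_Z$ onto $[\delta_n,1-\delta_n]$, so the remaining supremum equals $\sup_{\delta_n\le y\le 1-\delta_n}|B_n(y)|$, where $B_n(\cdot):=K(\cdot,n)/\sqrt n$ is, for each fixed $n$, a Brownian bridge (\cite{Quantile_Process,Quantile_Process_2}).

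It then remains to check $\sup_{\delta_n\le y\le 1-\delta_n}|B_n(y)|\to_D\sup_{0\le y\le 1}|B(y)|$. Fixing a single Brownian bridge $B$, we have $B_n\stackrel{d}{=}B$ for every $n$, so $\sup_{\delta_n\le y\le 1-\delta_n}|B_n(y)|$ has the law of $\sup_{\delta_n\le y\le 1-\delta_n}|B(y)|$. As $\delta_n=25\log\log n/n\downarrow 0$ (eventually monotone), the intervals $[\delta_n,1-\delta_n]$ increase and, by path continuity of $B$ with $B(0)=B(1)=0$, $\sup_{\delta_n\le y\le 1-\delta_n}|B(y)|\uparrow\sup_{0\le y\le 1}|B(y)|$ almost surely; continuity from below of the probability measure then yields $\sup_{\delta_n\le y\le 1-\delta_n}|B(y)|\to_D\sup_{0\le y\le 1}|B(y)|$, hence the same for $B_n$. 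Combining this with the $a.s.$-negligible remainder through a Slutsky argument---legitimate at every $c>0$ because $P(\sup_{0\le y\le1}|B(y)|=c)=0$, exactly as in the proof of Corollary~\ref{observation}---gives $P(T_n>c)\to P(\sup_{0\le y\le1}|B(y)|>c)$, and $P(\sup_{0\le y\le1}|B(y)|>c)=\sum_{k\neq 0}(-1)^{k+1}\exp(-2k^2c^2)$ by the formula cited in \cite{Quantile_Process_2}.

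The main obstacle is the endpoint behaviour: one must be certain that the uniform strong approximation \eqref{delta_n} genuinely needs only B3 over the whole truncation band $[\delta_n,1-\delta_n]$---this is precisely where the constant $\gamma$ of B3 is consumed, via the strong approximation of the normalized uniform quantile process cited in \cite{Strong_Approximation_Quantile}---and that pushing the truncation endpoints to $0$ and $1$ in the limit is legitimate even though the approximating bridge $B_n$ changes with $n$; the distributional-invariance observation $B_n\stackrel{d}{=}B$ together with monotone convergence is what makes that last step clean. Everything else is bookkeeping with the $O_{a.s.}$ terms inherited from Theorem~\ref{The_2}.
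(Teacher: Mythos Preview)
Your proposal is correct and follows essentially the same route as the paper: both reduce the statistic under $H_0$ to $\sqrt{n}\,f_Y(g(x))|\widehat g(x)-g(x)|$, invoke the strong approximation \eqref{delta_n}, and identify $K(\cdot,n)/\sqrt n$ with a Brownian bridge, using $P(\sup|B|=c)=0$ from Corollary~\ref{observation} at the end. The only cosmetic difference is that you package the passage to the limit via a single Slutsky step (using $B_n\stackrel{d}{=}B$ and monotone convergence on the shrinking truncation $[\delta_n,1-\delta_n]$), whereas the paper writes out separate $\limsup$/$\liminf$ bounds with an explicit $\epsilon$-sandwich; the underlying argument is the same.
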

 \begin{proof} According to equation \ref{delta_n}, define events $M_n=\sup_{\delta_n\leq F_Z(x)\leq 1-\delta_n}\sqrt{n}\frac{f_Z(x)}{h^{'}(x)}\vert \widehat{g}(x)-h(x)\vert>c$. Since
 \begin{equation}
 \sup_{\delta_n\leq F_Z(x)\leq 1-\delta_n}\sqrt{n}\frac{f_Z(x)}{h^{'}(x)}\vert \widehat{g}(x)-h(x)\vert\leq \sup_{0\leq y\leq 1}\vert\frac{K(y,n)}{\sqrt{n}}\vert+\sup_{\delta_n\leq F_Z(x)\leq 1-\delta_n}\vert\sqrt{n}\frac{f_Z(x)}{h^{'}(x)} (\widehat{g}(x)-h(x))-\frac{K(F_Z(x),n)}{\sqrt{n}}\vert
 \end{equation}
 Thus, for $\forall\ \epsilon>0$, for sufficiently large $n$,
 \begin{equation}
 P(\sup_{\delta_n\leq F_Z(x)\leq 1-\delta_n}\vert\sqrt{n}\frac{f_Z(x)}{h^{'}(x)} (\widehat{g}(x)-h(x))-\frac{K(F_Z(x),n)}{\sqrt{n}}\vert<\epsilon)=1 \end{equation}
 and
 \begin{equation}
 \lim \sup P(M_n)\leq \lim\sup P(\sup_{0\leq y\leq 1}\vert\frac{K(y,n)}{\sqrt{n}}\vert>c-\epsilon)=P(\sup_{0\leq y\leq 1}\vert B(y)\vert>c-\epsilon) \end{equation}
 For $\forall \epsilon>0$, thus $\lim\sup P(M_n)\leq P(\sup_{0\leq y\leq 1}\vert B(y)\vert\geq c)$ On the other hand, for $\forall x,\ F_Z(x)\in[\delta_n,1-\delta_n]$,
 \begin{equation} \sup_{\delta_n\leq F_Z(x)\leq 1-\delta_n}\sqrt{n}\frac{f_Z(x)}{h^{'}(x)}\vert \widehat{g}(x)-h(x)\vert\geq \vert\frac{K(F_Z(x),n)}{\sqrt{n}}\vert-\vert \frac{K(F_Z(x),n)}{\sqrt{n}}-\sqrt{n}\frac{f_Z(x)}{h^{'}(x)}(\widehat{g}(x)-h(x))\vert
 \end{equation} Also using equation \ref{delta_n}, for $n$ sufficiently large, we have
 \begin{equation}
 \lim_{n\to\infty}\inf P(M_n>c)\geq \lim\inf P(\sup_{\delta_n\leq y\leq 1-\delta_n}\vert\frac{K(y,n)}{\sqrt{n}}\vert>c+\epsilon)=P(\sup_{0<y< 1}\vert\ B(y)\vert>c+\epsilon)
 \end{equation}
 When $y=0,1$ $B(y)=0\ a.s.$ since $B$ is a Brownian bridge, $c>0$, thus $P(\sup_{0<y< 1}\vert\ B(y)\vert>c+\epsilon)=P(\sup_{0\leq y\leq 1}\vert\ B(y)\vert>c+\epsilon)$. Also, since $\epsilon$ is arbitrary, we have $\lim_{n\to\infty}\inf P(M_n)\geq P(\sup_{0\leq y\leq 1}\vert\ B(y)\vert>c)$. From observation in theorem \ref{observation}, we know that, when $c>0$, $P(\sup_{0\leq y\leq 1}\vert B(y)\vert\geq c)=P(\sup_{0\leq y\leq 1}\vert\ B(y)\vert>c)$ and the theorem is proved.
 \end{proof}
 Now we will consider the alternatives, the next theorem shows that, if $h$ is sufficiently close to $g$ in the uniform norm, then the power of test will decrease.
 \begin{theorem}
 Consider the same test and same condition on theorem \ref{Test_Null}, $h$ is continuous differentiable on $[a,b]$ and has positive derivative on $(a,b)$.

 1) If $\exists x_0\in(a,b)$ such that $g(x_0)\neq h(x_0)$, then $P(M_n)\to 1$ as $n\to\infty$.

 2) We suppose alternative $H_1^{'}: h(x)=g(x)+\frac{1}{\sqrt{n}}s(x)$, here $s(x)\in C^{1}$ on $[a,b]$ and its derivative are no less than 0. Suppose $B(y)$ is a standard Brownian bridge, then the power of test satisfies
 \begin{equation}
 \lim_{n\to\infty}\sup P(M_n)\leq P(\sup_{0\leq y\leq 1}\vert B(y)\vert\geq c-\sup_{a<x<b}\frac{f_Z(x)\vert s(x)\vert}{g^{'}(x)})
 \end{equation}
 \end{theorem}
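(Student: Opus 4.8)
The plan is to handle the two parts separately: part (1) is a pure consistency argument resting on the first theorem of this section, while part (2) is a repeat of the approximation scheme of Theorem~\ref{The_2} with the null function replaced by the drifting alternative $h=h_n=g+s/\sqrt n$.

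\textbf{Part (1).} Since $g$ and $h$ are continuous with $g(x_0)\neq h(x_0)$, there are $\eta>0$ and a nondegenerate interval $I\subseteq(a,b)$ on which $|g-h|>\eta$. Fix an interior point $x_1\in I$; because $F_Z$ is strictly increasing (A1) and $a<x_1<b$ we have $0<F_Z(x_1)<1$, hence $\delta_n<F_Z(x_1)<1-\delta_n$ for all large $n$, so $x_1$ lies in the feasibility window of the test statistic. The coefficient $f_Z(x_1)/h'(x_1)$ is a fixed strictly positive constant by B3 and the positivity of $h'$. By the pointwise consistency $\widehat g(x_1)\to_{a.s.}g(x_1)$ proved earlier, eventually $|\widehat g(x_1)-h(x_1)|\ge|g(x_1)-h(x_1)|-|\widehat g(x_1)-g(x_1)|>\eta/2$ almost surely, so the statistic is bounded below by $\tfrac{\eta}{2}\sqrt n\,f_Z(x_1)/h'(x_1)\to\infty$. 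Hence $\mathbf{1}_{M_n}\to_{a.s.}1$, and bounded convergence gives $P(M_n)\to1$.

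\textbf{Part (2).} Here $h=h_n$, so $h'(x)=g'(x)+s'(x)/\sqrt n\ge g'(x)>0$ by the sign condition $s'\ge0$; this is the key structural fact, because it yields $0<g'(x)/h'(x)\le1$ and $f_Z(x)/h'(x)\le f_Z(x)/g'(x)$ with no need to control $g'$ near the endpoints. Using $f_Y(g(x))=f_Z(x)/g'(x)$ (Lemma~\ref{lemma_trans} plus differentiation), write
\begin{equation}
\sqrt n\,\frac{f_Z(x)}{h'(x)}\bigl(\widehat g(x)-h(x)\bigr)=\frac{g'(x)}{h'(x)}\Bigl[\sqrt n\,f_Y(g(x))\bigl(\widehat g(x)-g(x)\bigr)\Bigr]-\frac{f_Z(x)s(x)}{h'(x)} .
\end{equation}
Equation~\ref{delta_n} gives $r_n:=\sup_{\delta_n\le F_Z(x)\le1-\delta_n}\bigl|\sqrt n\,f_Y(g(x))(\widehat g(x)-g(x))-K(F_Z(x),n)/\sqrt n\bigr|=O_{a.s.}\bigl((\log\log n)^{1/4}(\log n)^{1/2}n^{-1/4}\bigr)\to_{a.s.}0$, while the last term is bounded in absolute value by $S:=\sup_{a<x<b}f_Z(x)|s(x)|/g'(x)$ (if $S=\infty$ the asserted bound is vacuous, so assume $S<\infty$). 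Combining with the triangle inequality and $g'/h'\le1$,
\begin{equation}
\sup_{\delta_n\le F_Z(x)\le1-\delta_n}\sqrt n\,\frac{f_Z(x)}{h'(x)}\,|\widehat g(x)-h(x)|\le\sup_{0\le y\le1}\Bigl|\frac{K(y,n)}{\sqrt n}\Bigr|+r_n+S .
\end{equation}
Therefore $M_n\subseteq\{\sup_{0\le y\le1}|K(y,n)/\sqrt n|+r_n>c-S\}$, so for each $\epsilon>0$ and all large $n$, $P(M_n)\le P\bigl(\sup_{0\le y\le1}|K(y,n)/\sqrt n|>c-S-\epsilon\bigr)+P(r_n\ge\epsilon)$. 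Since $K(\cdot,n)/\sqrt n$ is a Brownian bridge the first probability equals $P(\sup_{0\le y\le1}|B(y)|>c-S-\epsilon)$, and $P(r_n\ge\epsilon)\to0$, giving $\lim\sup_n P(M_n)\le P(\sup_{0\le y\le1}|B(y)|>c-S-\epsilon)$ for every $\epsilon>0$. Letting $\epsilon\downarrow0$ and using continuity of measure together with the fact, recorded in the proof of the corollary, that $P(\sup_{0\le y\le1}|B(y)|=c-S)=0$ when $c-S>0$, we conclude $\lim\sup_n P(M_n)\le P(\sup_{0\le y\le1}|B(y)|\ge c-S)$, as claimed.

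\textbf{Main obstacle.} The only genuinely delicate point is propagating the Kiefer approximation uniformly over the drifting window $\{\delta_n\le F_Z(x)\le1-\delta_n\}$ while simultaneously carrying the $n$-dependence of $h=h_n$ through the factor $g'(x)/h'(x)$, which could misbehave near the endpoints of $(a,b)$. The hypothesis $s'\ge0$ is exactly what removes this difficulty, since it lets us replace that factor by the harmless bound $1$; after that, every estimate needed is already available from Theorem~\ref{The_2} and its corollary, and the rest is bookkeeping with the triangle inequality and continuity of the Brownian-bridge maximum distribution.
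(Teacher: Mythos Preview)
Your proof is correct and follows essentially the same route as the paper. In part~(1) both arguments lower-bound the test statistic at a single interior point $x_0$ via the triangle inequality; you invoke only almost-sure consistency of $\widehat g(x_0)$ and bounded convergence, whereas the paper uses the sharper fact that $\sqrt n(\widehat g(x_0)-g(x_0))=O_p(1)$ from the Kiefer approximation, but either suffices. In part~(2) both proofs rest on the same key observation $s'\ge0\Rightarrow h'\ge g'\Rightarrow f_Z/h'\le f_Z/g'$, then split off the drift term by the triangle inequality; the paper stops after displaying the resulting inequality and writes ``And we get the result,'' while you carry out the $\epsilon$-limiting argument and the continuity of the Brownian-bridge maximum explicitly. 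Your write-up is therefore a faithful, more detailed execution of the paper's sketch rather than a genuinely different approach.
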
 \begin{proof} For sufficiently large $n$, $\delta_n<F_Z(x_0)<1-\delta_n$ since $F_Z$ is strictly increasing and $x_0\in(a,b)$. Then, we have \begin{equation} \begin{aligned} \sup_{\delta_n\leq F_Z(x)\leq 1-\delta_n}\sqrt{n}\frac{f_Z(x)}{h^{'}(x)}\vert \widehat{g}(x)-h(x)\vert\geq \sup_{\delta_n\leq F_Z(x)\leq 1-\delta_n}\sqrt{n}\frac{f_Z(x)}{h^{'}(x)}(\vert g(x)-h(x)\vert-\vert\widehat{g}(x)-g(x)\vert)\\ \geq \sqrt{n}\frac{f_Z(x_0)}{h^{'}(x_0)}\vert g(x_0)-h(x_0)\vert-\sqrt{n}\frac{f_Z(x_0)}{h^{'}(x_0)}\vert \widehat{g}(x_0)-g(x_0)\vert \end{aligned} \end{equation} Thus, \begin{equation} P(M_n)\geq P(\sqrt{n}\frac{f_Z(x_0)}{h^{'}(x_0)}\vert g(x_0)-h(x_0)\vert-c>\sqrt{n}\frac{g^{'}(x_0)}{h^{'}(x_0)}\frac{f_Z(x_0)}{g^{'}(x_0)}\vert \widehat{g}(x_0)-g(x_0)\vert) \label{pow} \end{equation} Since $\sqrt{n}\frac{f_Z(x_0)}{g^{'}(x_0)}(\widehat{g}(x_0)-g(x_0))\to_{a.s.}K(F_Z(x_0),n)/\sqrt{n}$ so it is $O_p(1)$. On the other hand, since $\sqrt{n}\vert g(x_0)-h(x_0)\vert\to\infty$, we know that $P(M_n)\to 1$ and the first part is proved.

 For the second part, notice that \begin{equation} \begin{aligned} \sup_{\delta_n\leq F_Z(x)\leq 1-\delta_n}\sqrt{n}\frac{f_Z(x)}{g^{'}(x)(1+\frac{s^{'}(x)}{\sqrt{n}g^{'}(x)})}\vert\widehat{g}(x)-g(x)-\frac{s(x)}{\sqrt{n}}\vert\\ \leq \sup_{\delta_n\leq F_Z(x)\leq 1-\delta_n}\sqrt{n}\frac{f_Z(x)}{g^{'}(x)}\vert\widehat{g}(x)-g(x)\vert+\sup_{a<x<b}\frac{f_Z(x)\vert s(x)\vert}{g^{'}(x)}
 \end{aligned}
 \end{equation}
 And we get the result.
 \end{proof}
 From this theorem, we know that, if $\sup_{a<x<b}\frac{f_Z(x)\vert s(x)\vert}{g^{'}(x)}$ is bigger than $c$, then it is possible for the power of test to get close to 1 as sample size is large. On the contrary, if this term is smaller than $c$, then the power of test will be less than 1 asymptotic even in the best situation.
 \section{Estimation for dependent data}
 In this section, we discuss estimation of transfer function under condition A2 or A3. We will firstly discuss convergence and uniform convergence of estimators of transfer function, and then we will apply re-sampling methods and construct confidence intervals.
 \begin{theorem} Assume A2 and B1, B2, then for given $x$, if $f_Z(x)>0$, then we have $\widehat{g}(x)\to_{a.s.} g(x)$. Moreover, if instead of A2, we assume $A2^{*}$: $Z_k=\Sigma_{i=0}^\infty a_i\epsilon_{k-i}$ and  $\sup_{x\in\mathbf{R}}\vert f_\epsilon^{''}(x)\vert<\infty$, the coefficients $a_i$ satisfy \label{Thm_5}
 \begin{equation}
 \Sigma_{i=1}^{\infty}\vert a_i\vert^{\min(\alpha/q,1)}<\infty
 \end{equation}
 for some $q\geq 2$ and $\alpha$ is the same as A2. Then, suppose $[c,d]$ being interval such that $\inf_{c\leq x\leq d}f_Z(x)>0$, then we have $\widehat{g}(x)\to g(x)$ almost surely and uniformly on $[c,d]$.
 \end{theorem}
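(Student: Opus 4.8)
The plan is to push everything through the sample quantile function of the known sequence $(Z_i)$ and then transport it by $g$. Since $x\in(a,b)$ implies $F_Z(x)\in(0,1)$, Lemma \ref{lm2} applies and gives $\widehat g(x)=\xi_n^Y(F_Z(x))=g\big(\xi_n^Z(F_Z(x))\big)$. Because $g$ is continuous by B2, the first assertion reduces to showing $\xi_n^Z(F_Z(x))\to_{a.s.}x$, and the second reduces to $\sup_{x\in[c,d]}\big|\xi_n^Z(F_Z(x))-x\big|\to_{a.s.}0$ together with uniform continuity of $g$ on a compact neighbourhood of $[c,d]$. Note also that $\inf_{[c,d]}f_Z>0$ forces $[c,d]\subset(a,b)$, so that neighbourhood stays inside the domain.

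For the pointwise claim I would first record that $f_Z$ is continuous: writing $Z_0=a_{i_0}\epsilon_{-i_0}+R$ with $i_0$ the smallest index with $a_{i_0}\neq0$ and $R$ independent of $\epsilon_{-i_0}$, one has $f_Z(z)=\mathbf{E}\big[|a_{i_0}|^{-1}f_\epsilon\big((z-R)/a_{i_0}\big)\big]$, which inherits the boundedness and Lipschitz continuity of $f_\epsilon$ from A2. Hence $f_Z(x)>0$ forces $f_Z>0$, and therefore $F_Z$ strictly increasing, on $[x-\epsilon,x+\epsilon]$ for all small $\epsilon>0$; in particular $F_Z(x-\epsilon)<F_Z(x)<F_Z(x+\epsilon)$. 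Since $Z_k=\Sigma_{i\ge0}a_i\epsilon_{k-i}$ is a measurable functional of the i.i.d. sequence $(\epsilon_j)$ (a Bernoulli shift), it is stationary and ergodic, so the ergodic theorem (or the empirical-process result of \cite{wu2005}) yields $F_n^Z(x\pm\epsilon)\to_{a.s.}F_Z(x\pm\epsilon)$. On that event, for $n$ large we get $F_n^Z(x-\epsilon)<F_Z(x)\le F_n^Z(x+\epsilon)$, and monotonicity of $F_n^Z$ in the definition $\xi_n^Z(p)=\inf\{t:F_n^Z(t)\ge p\}$ sandwiches $x-\epsilon\le\xi_n^Z(F_Z(x))\le x+\epsilon$. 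Taking $\epsilon$ along a sequence tending to $0$ and intersecting the corresponding almost sure events gives $\xi_n^Z(F_Z(x))\to_{a.s.}x$, whence $\widehat g(x)=g(\xi_n^Z(F_Z(x)))\to_{a.s.}g(x)$ by continuity of $g$.

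For the uniform claim the extra hypotheses in $A2^{*}$ serve to upgrade the pointwise convergence of $F_n^Z$ to the uniform strong law $\|F_n^Z-F_Z\|_\infty\to_{a.s.}0$, which I would take from the strong approximation of the empirical/quantile process of a linear process in \cite{wu2005} under $A2^{*}$ (the elementary finite-grid argument also deduces this uniform statement from the pointwise ergodic convergence, using only that $F_Z$ is a continuous distribution function; and the same smoothing argument as above shows $f_Z$ is continuous under $A2^{*}$). Since $m_1:=\inf_{[c-\delta,d+\delta]}f_Z>0$ for some $\delta>0$ with $[c-\delta,d+\delta]\subset(a,b)$, the restriction $F_Z|_{[c-\delta,d+\delta]}$ is a homeomorphism onto its image with inverse $\xi^Z$, and $\xi^Z(F_Z(x))=x$ for $x\in[c,d]$. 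If $\|F_n^Z-F_Z\|_\infty<\tfrac12\eta m_1$ with $0<\eta<\delta$, then for every $p\in[F_Z(c),F_Z(d)]$ we have $F_n^Z(\xi^Z(p)-\eta)\le F_Z(\xi^Z(p)-\eta)+\tfrac12\eta m_1\le p-\eta m_1+\tfrac12\eta m_1<p$, so $\xi_n^Z(p)\ge\xi^Z(p)-\eta$, and symmetrically $\xi_n^Z(p)\le\xi^Z(p)+\eta$; hence $\sup_{x\in[c,d]}|\xi_n^Z(F_Z(x))-x|\le\eta$ eventually, a.s. Once this supremum is $<\delta$, all the arguments $\xi_n^Z(F_Z(x))$, $x\in[c,d]$, lie in $[c-\delta,d+\delta]$, on which $g$ is uniformly continuous; therefore $\sup_{x\in[c,d]}|\widehat g(x)-g(x)|=\sup_{x\in[c,d]}|g(\xi_n^Z(F_Z(x)))-g(x)|\to_{a.s.}0$.

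The step I expect to be the main obstacle is supplying the dependent-data inputs — $F_n^Z(t)\to_{a.s.}F_Z(t)$ pointwise for the first part and $\|F_n^Z-F_Z\|_\infty\to_{a.s.}0$ for the second — and, more precisely, matching them to the hypotheses: the strengthened coefficient summability $\Sigma_i|a_i|^{\min(\alpha/q,1)}<\infty$ together with $\sup_x|f_\epsilon''(x)|<\infty$ in $A2^{*}$ is exactly what is needed to invoke the uniform (Bahadur-type) quantile/empirical-process results for linear processes of \cite{wu2005}, which deliver more than bare a.s. consistency but are the standard form in which this is available. Everything downstream — the Bernoulli-shift ergodicity, the continuity and positivity of $f_Z$ near the point or interval in question, the monotone quantile-inversion sandwich, and transporting convergence through the continuous $g$ — is routine.
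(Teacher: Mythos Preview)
Your proposal is correct and follows the same high-level reduction as the paper: apply Lemma~\ref{lm2} to write $\widehat g(x)=g(\xi_n^Z(F_Z(x)))$, show $\xi_n^Z(F_Z(x))\to_{a.s.}x$ (pointwise, then uniformly on $[c,d]$), and transport through the continuous $g$. The downstream logic (continuity of $f_Z$ via the convolution representation, $\xi^Z(F_Z(x))=x$ when $f_Z(x)>0$, uniform continuity of $g$ on a compact enlargement of $[c,d]$) matches the paper.

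Where you differ is in how you obtain the quantile convergence. The paper goes straight to the Bahadur representation from \cite{wu2005}, namely
\[
\xi_n^Z(F_Z(x))-x=\frac{F_Z(x)-F_n^Z(x)}{f_Z(x)}+O_{a.s.}\!\big(n^{-3/4}(\log\log n)^{1/2}l_q^{1/2}(n)\big),
\]
and then controls $F_n^Z(x)-F_Z(x)$ by the law of the iterated logarithm; for the uniform part it invokes the uniform Bahadur bound $\sup_{[c,d]}|\xi_n^Z(F_Z(x))-x|=o_{a.s.}(c_q(n)/\sqrt n)$ directly under $A2^{*}$. You instead run a more elementary sandwich: pointwise ergodic convergence of $F_n^Z$ at $x\pm\epsilon$ traps $\xi_n^Z(F_Z(x))$ in $[x-\epsilon,x+\epsilon]$, and for the uniform part you convert $\|F_n^Z-F_Z\|_\infty\to_{a.s.}0$ into a uniform quantile bound via the Lipschitz inverse on $[c-\delta,d+\delta]$. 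Your route uses strictly less machinery (indeed, as you note, the finite-grid Glivenko--Cantelli argument already gives $\|F_n^Z-F_Z\|_\infty\to_{a.s.}0$ from ergodicity and continuity of $F_Z$, so bare consistency does not truly require the full strength of $A2^{*}$); the paper's route, by contrast, yields explicit rates as a free byproduct and ties directly into the quantitative results of \cite{wu2005} used elsewhere.
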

 \begin{proof}
 According to theorem 1 in \cite{wu2005}, we choose $p$ in that theorem as $F_Z(x)$, since $f_Z(x)>0$, $\xi^Z(F_Z(x))=x$. This is because, on one hand, from definition of $\xi^Z$, since $F_Z(x)\geq F_Z(x)$, $x\geq \xi^Z(F_Z(x))$. On the other hand, for $y<x$ close to $x$, $F_Z(y)<F_Z(x)-\frac{1}{2}f_Z(x)(x-y)<F_Z(x)$. Since $F_Z$ is increasing and right continuous, from definition of $\xi^Z$, $F_Z(\xi^Z(F_Z(x)))\geq F_Z(x)\Rightarrow \xi^Z(F_Z(x))\geq x$, thus the equality holds. We have
 \begin{equation}
 \xi^Z_n(F_Z(x))-x=\frac{F_Z(x)-F_n^Z(x)}{f_Z(x)}+O_{a.s.}(n^{-3/4}(\log\log n)^{1/2}l_q^{1/2}(n))
 \end{equation} Here $l_q(n)=(\log\log n)^{1/2}$. According to \cite{wu2005}, we have that $\exists \sigma_1\in\mathbf{R}$ being a constant such that \begin{equation}
 \lim\sup_{n\to\infty}\pm\frac{\sqrt{n}(F_n^Z(x)-F_Z(x))}{\sqrt{2\log\log n}}=\sigma_1\Rightarrow\ F_n^Z(x)-F_Z(x)=O_{a.s.}\left(\frac{\sqrt{\log\log n}}{\sqrt{n}}\right)
 \end{equation} Thus, in particular, $\xi_n^Z(F_Z(x))\to_{a.s.} x$. Since $\xi^Y_n(F_Z(x))=g(\xi_n^Z(F_Z(x)))$ and $g$ is continuous, we have \begin{equation}
 \widehat{g}(x)=\xi^Y_n(F_Z(x))\to_{a.s.} g(x)
 \end{equation} For the second part, according to \cite{wu2005}, under the condition stated above, notice that $f_Z(x)>0,\ x\in[c,d]\Rightarrow F_Z(x)$ being strictly increasing and thus, \begin{equation} \sup_{c\leq x\leq d}\vert\xi_n^Z(F_Z(x))-x\vert=o_{a.s.}\left(\frac{c_q(n)}{\sqrt{n}}\right) \end{equation} Here, $c_q(n)=(\log n)^{1/q}(\log\log n)^{2/q}$ if $q>2$ and $(\log n)^{3/2}(\log\log n)$ if $q=2$. Since $[c,d]$ is closed interval and $g$ is continuous, thus is uniform continuous on $[c,d]$. Therefore, uniformly convergence is proved. \end{proof} Finally, we will consider construction of point-wise confidence intervals of transfer function. Here we will apply resampling methods to this problem. Here is a theorem dealing with this problem.
 \begin{theorem}
 Suppose B1, B2, A3 and the $A2^{*}$, and suppose $x$ is a given constant such that $\exists c<x<d$ and $\inf_{c\leq y\leq d}f_Z(y)>0$, $g^{'}(x)>0$. Define $\eta$ being a positive constant. For $b=b(n)$ satisfying: $b/n\to 0$ and $b\to\infty$, we define statistics
 \begin{equation}
 S_{n,b}(\eta,x)=\frac{1}{n-b+1}\Sigma_{i=1}^{n-b+1}\mathbf{1}\left\{\sqrt{b}\vert\widehat{g}_{b,i}(x)-\widehat{g}(x)\vert\leq \eta\right\} \end{equation}
 Here, $\widehat{g}_{b,i}(x)=\xi_{b,i}^Y(F_Z(x))$ with $\xi_{b,i}^Y(p)$ being sample quantile with sample $\left\{Y_i,Y_i+1,...,Y_i+b-1\right\}$ Then, we have: 1) $S_{n,b}(\eta,x)\to P(\sqrt{n}\vert\widehat{g}(x)-g(x)\vert\leq \eta)$ in probability. 2) Suppose $d(1-\alpha)=\inf\left\{\eta|S_{n,b}(\eta,x)\geq 1-\alpha\right\}$, then \begin{equation} P(\sqrt{n}\vert\widehat{g}(x)-g(x)\vert\leq d(1-\alpha))\to 1-\alpha \end{equation}
 \label{The_6}
 \end{theorem}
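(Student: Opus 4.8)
The proof follows the subsampling paradigm of Politis and Romano: first pin down the limiting law of the scaled error $\sqrt n\,|\widehat g(x)-g(x)|$, then show $S_{n,b}(\eta,x)$ estimates that law consistently, and finally pass from distributional convergence to the coverage statement.

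\textbf{Step 1 (limit law of the root).} Using the Bahadur-type representation for sample quantiles of linear processes under $A2^{*}$ (Theorem 1 of \cite{wu2005}), together with $f_Z(x)>0$ and $\xi^Z(F_Z(x))=x$ exactly as in the proof of Theorem \ref{Thm_5}, I would write $\xi_n^Z(F_Z(x))-x=(F_Z(x)-F_n^Z(x))/f_Z(x)+o_{a.s.}(n^{-1/2})$, and recall that $\xi_n^Z(F_Z(x))\to_{a.s.}x$. Since $g$ is differentiable at $x$ with $g'(x)>0$ (assumption $B2$), a one-term Taylor expansion gives $\widehat g(x)-g(x)=\frac{g'(x)}{f_Z(x)}\big(F_Z(x)-F_n^Z(x)\big)+r_n$ where $\sqrt n\,r_n\to_p 0$ because $\sqrt n(\xi_n^Z(F_Z(x))-x)=O_p(1)$. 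The empirical-process central limit theorem at the point $x$ for the linear process $Z$ then yields $\sqrt n(\widehat g(x)-g(x))\to_D N(0,\tau^2(x))$ with $\tau^2(x)=g'(x)^2\sigma^2(x)/f_Z(x)^2$, $\sigma^2(x)$ the long-run variance of $\{\mathbf 1_{Z_k\le x}\}$. Hence $J_n(\eta):=P(\sqrt n|\widehat g(x)-g(x)|\le\eta)\to J(\eta):=P(|N(0,\tau^2(x))|\le\eta)$, and $J$ is continuous everywhere and strictly increasing on $(0,\infty)$ provided $\tau^2(x)>0$. Running the identical argument with block size $b$ in place of $n$ shows $P(\sqrt b\,|\widehat g_{b,1}(x)-g(x)|\le\eta)\to J(\eta)$ as well.

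\textbf{Step 2 (consistency of the subsampling distribution).} First dispose of the data-dependent centering: since $b/n\to0$, $\sqrt b(\widehat g(x)-g(x))=\sqrt{b/n}\,\sqrt n(\widehat g(x)-g(x))\to_p0$, so by the triangle inequality and monotonicity of indicators, for any $\epsilon>0$ the quantity $S_{n,b}(\eta,x)$ is, with probability tending to one, sandwiched between $\bar S_{n,b}(\eta-\epsilon,x)$ and $\bar S_{n,b}(\eta+\epsilon,x)$, where $\bar S_{n,b}(\eta,x)=\frac{1}{n-b+1}\sum_{i=1}^{n-b+1}\mathbf 1\{\sqrt b|\widehat g_{b,i}(x)-g(x)|\le\eta\}$ and $\widehat g_{b,i}(x)=\xi_{b,i}^Y(F_Z(x))$ uses the block $\{Y_i,\dots,Y_{i+b-1}\}$. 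Put $U_i=\mathbf 1\{\sqrt b|\widehat g_{b,i}(x)-g(x)|\le\eta\}$. By stationarity $\mathbf E\bar S_{n,b}(\eta,x)=P(\sqrt b|\widehat g_{b,1}(x)-g(x)|\le\eta)\to J(\eta)$ by Step 1. Each $U_i$ is a $[0,1]$-valued function of $(Z_i,\dots,Z_{i+b-1})$, so splitting $\mathrm{Var}\,\bar S_{n,b}=\frac{1}{(n-b+1)^2}\sum_{i,j}\mathrm{Cov}(U_i,U_j)$ into pairs with $|i-j|\le b$ (at most $O((n-b)b)$ of them, each covariance $\le1/4$) and pairs with $|i-j|>b$ (for which non-overlapping blocks and assumption $A3$ bound the covariance by the mixing coefficient $\alpha_Z(|i-j|-b)$) gives $\mathrm{Var}\,\bar S_{n,b}(\eta,x)=O(b/n)+O\!\big(\tfrac1n\sum_{k\le n}\alpha_Z(k)\big)\to0$, since $\alpha_Z(k)\to0$ forces the Ces\`aro average to vanish and $b/n\to0$. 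Thus $\bar S_{n,b}(\eta,x)\to_p J(\eta)$ for every $\eta$; with the $\epsilon$-sandwich and $J$ continuous this yields $S_{n,b}(\eta,x)\to_p J(\eta)$ for every $\eta$, which together with Step 1 is part 1). Finally, monotonicity of $\eta\mapsto S_{n,b}(\eta,x)$ and continuity of $J$ upgrade pointwise convergence to $\sup_\eta|S_{n,b}(\eta,x)-J(\eta)|\to_p0$.

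\textbf{Step 3 (from quantiles to coverage).} With $\tau^2(x)>0$, $J$ is continuous and strictly increasing near $q:=J^{-1}(1-\alpha)$, so the uniform convergence of Step 2 gives $d(1-\alpha)=\inf\{\eta:S_{n,b}(\eta,x)\ge1-\alpha\}\to_p q$. Since $\sqrt n|\widehat g(x)-g(x)|\to_D|N(0,\tau^2(x))|$ while $d(1-\alpha)$ converges in probability to the constant $q$, Slutsky's theorem and continuity of $J$ at $q$ give $P(\sqrt n|\widehat g(x)-g(x)|\le d(1-\alpha))\to J(q)=1-\alpha$, which is part 2).

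\textbf{Main obstacle.} The crux is Step 2: the overlapping blocks make neighbouring $U_i$ strongly (not merely mixing-)dependent, so the variance estimate must isolate the near-diagonal band of width $b$ — harmless only because $b/n\to0$ — from the far-off-diagonal part controlled by $\alpha$-mixing, and the replacement of $g(x)$ by $\widehat g(x)$ must be handled uniformly over the $n-b+1$ blocks. A secondary point to flag is non-degeneracy of the limit: the statement only assumes $g'(x)>0$, but for part 2 to be an exact equality one also needs $\sigma^2(x)>0$, which should be recorded among the hypotheses (or shown to follow from $A2^{*}$ together with $f_Z(x)>0$).
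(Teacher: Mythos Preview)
Your proof is correct and follows the same route as the paper: establish a non-degenerate normal limit for $\sqrt{n}(\widehat g(x)-g(x))$ via Wu's Bahadur representation together with Lemma~\ref{lm2} and the delta method, then invoke the Politis--Romano subsampling machinery under $\alpha$-mixing. The paper's own proof defers your Steps~2 and~3 entirely to \cite{subsampling} and verifies only Step~1; your remark that non-degeneracy of the limit ($\sigma^2(x)>0$) deserves an explicit hypothesis or check is well taken, since the paper asserts but does not verify it.
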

 \begin{proof}
 According to \cite{subsampling}, the only thing to prove is that $\sqrt{n}(\widehat{g}(x)-g(x))$ converges to a non-degenerated distribution. According to \cite{wu2005}, since for $q\geq 2$, for sufficiently large $n$, $\vert a_n\vert<1$ (otherwise the summation will not converge), then \begin{equation} \Sigma_{i=n}^{\infty}\vert a_i\vert^{\min(1,\alpha/2)}\leq \Sigma_{i=n}^{\infty}\vert a_i\vert^{\min(1,\alpha/q)}<\infty \end{equation} Thus, we have
 \begin{equation}
 \sqrt{n}(F_n^Z(x)-F_Z(x))\to N(0,\sigma^2)
 \end{equation} weakly. $N$ is a normal distribution with unknown variance. Therefore, according to \cite{wu2005}, similar with theorem \ref{Thm_5}, we have \begin{equation} \xi_n^Z(F_Z(x))-x=\frac{F_Z(x)-F_n^Z(x)}{f_Z(x)}+O_{a.s.}(n^{-3/4}(c_q(n)\log n)^{1/2}) \end{equation} Thus, $\sqrt{n}(\xi_n^Z(F_Z(x))-x)\to N(0,\sigma^2/f_Z^2(x))$. Since $g$ is differentiable at $x$, according to lemma \ref{lm2} and delta method, we have \begin{equation} \sqrt{n}(\widehat{g}(x)-g(x))=\sqrt{n}((g(\xi_n^Z(F_Z(x))))-g(x))\to N(0,g^{'}(x)^2\sigma^2/f_Z^2(x)) \end{equation} The result is proved. \end{proof}
\section{Numerical Experiments and Examples}
In this section, we discuss finite sample behaviors of the aforementioned statistics. We divide this section into two parts. In first part, we apply this statistics to several constructed data. In the second part, we will apply the aforementioned theory to study how well the primary settler of a urban waste water treatment plant clean the organics in the waste water (detail explanation and data can be gathered at\cite{Dua:2017} and the reference therein).
\subsection{Finite sample behavior of statistics on constructed data}
\begin{example}[i.i.d data with normal distribution] Here, we suppose $Z_i,\ i=1,2,...,n$ satisfy standard normal distribution. Notice that, for large $x$,
\begin{equation}
\begin{aligned}
\frac{1-F_Z(x)}{f_Z(x)}=\int_{x}^\infty\exp\left(\frac{1}{2}x^2-\frac{1}{2}t^2\right)dt=\int_{0}^\infty \exp\left(-\frac{1}{2}y^2-xy\right)\\
\leq\int_{0}^\infty \exp(-xy)dy=\frac{1}{x}
\end{aligned}
\end{equation}
Similarly, for $x\to-\infty$, $\frac{F_Z(x)}{f_Z(x)}=O\left(\frac{1}{\vert x\vert}\right)$. Notice that $\vert f_Z^{'}(x)/f_Z(x)\vert=x$. We constraint $x\in[-2,2]$, and choose $g(x)$ as 1) $(x+4)^2$, 2) $\log(x+5)$, 3) $x^3$. Notice that, for $g(x)=x^3$, it has 0 derivative at $x=0$ and we demonstrate how will the confidence band be influenced when assumption B3 is violated. Other functions all satisfy assumption B3. From example, we notice that, when derivative of $g(x)$ is not close to 0, confidence bands will be tight and close to confidence intervals, and when the derivative of $g(x)$ is relatively small, the performance of confidence bands will be inferior. When assumption B3 is violated, width of confidence bands will be severely influenced. The width of confidence intervals is not sensitive for small $g^{'}(x)$. However, large derivative of $g$ will affect width of confidence intervals.
\begin{figure}[htbp]
  \centering
  \includegraphics[width=12cm]{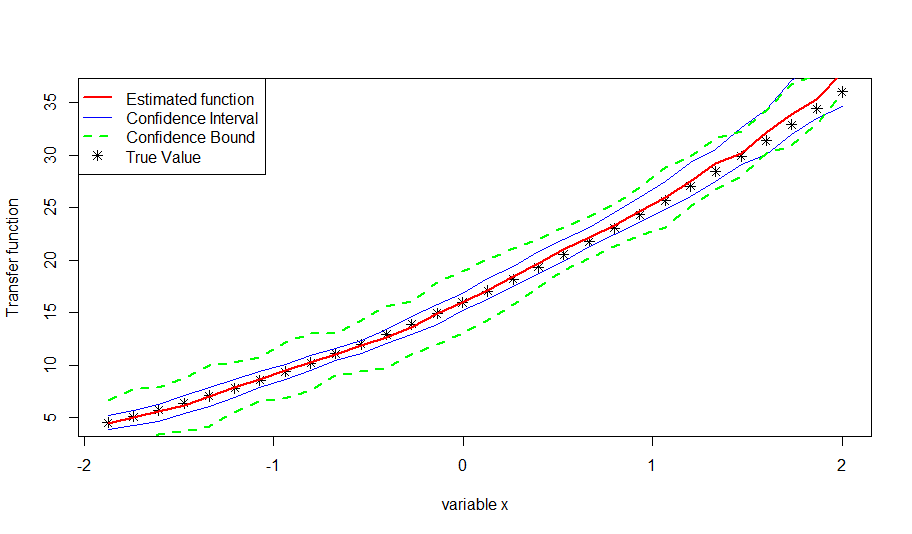}\\
  \caption{Estimator of $g(x)=(x+4)^2$, sample size is 1000 and confidence level is 0.99}
  \label{x_3_plus}
\end{figure}

\begin{figure}
\centering
\includegraphics[width=12cm]{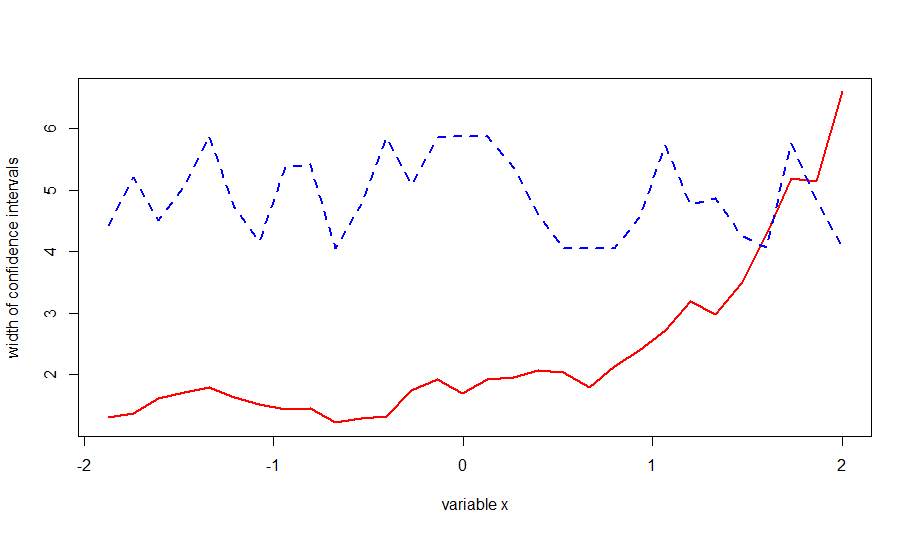}\\
\caption{Width of confidence intervals and bands of $g(x)=(x+4)^2$, solid line for confidence intervals and dashed line for confidence band}
\end{figure}

\begin{figure}[htbp]
  \centering
  \includegraphics[width=12cm]{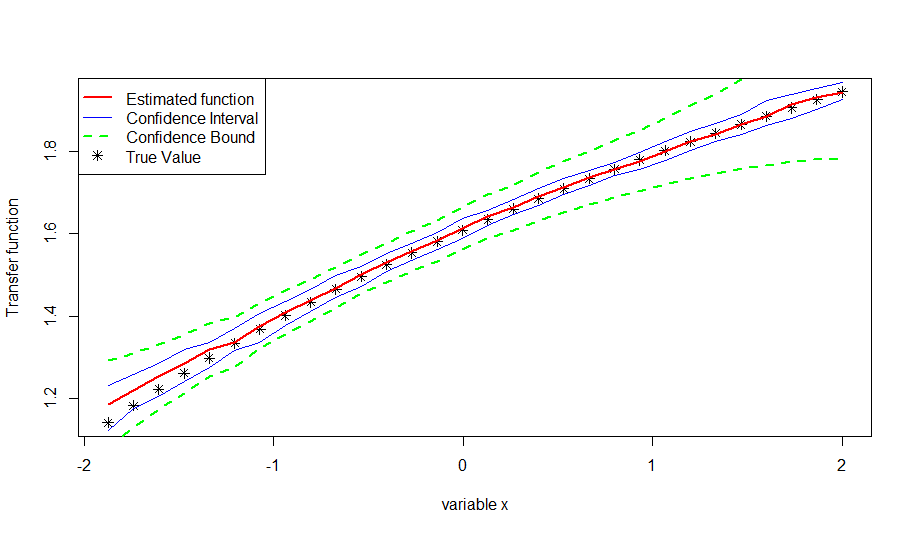}\\
  \caption{Estimator of $g(x)=\log(x+5)$, sample size is 1000 and confidence level is 0.99}
  \label{Log_x}
\end{figure}

\begin{figure}
  \centering
  \includegraphics[width=12cm]{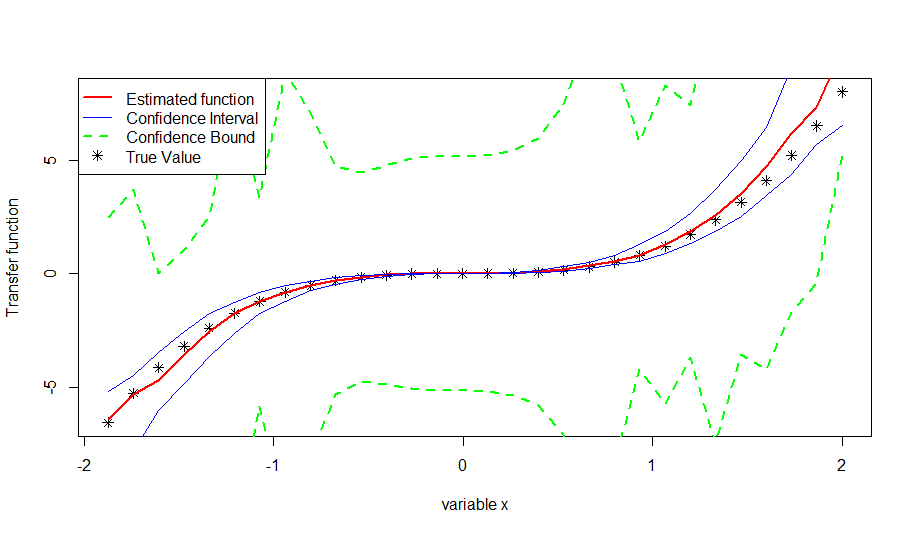}\\
  \caption{Estimator of $g(x)=x^3$, sample size is 1000 and confidence level is 0.99}\label{small_x3}
\end{figure}

\begin{figure}[htbp]
  \centering
  \includegraphics[width=12cm]{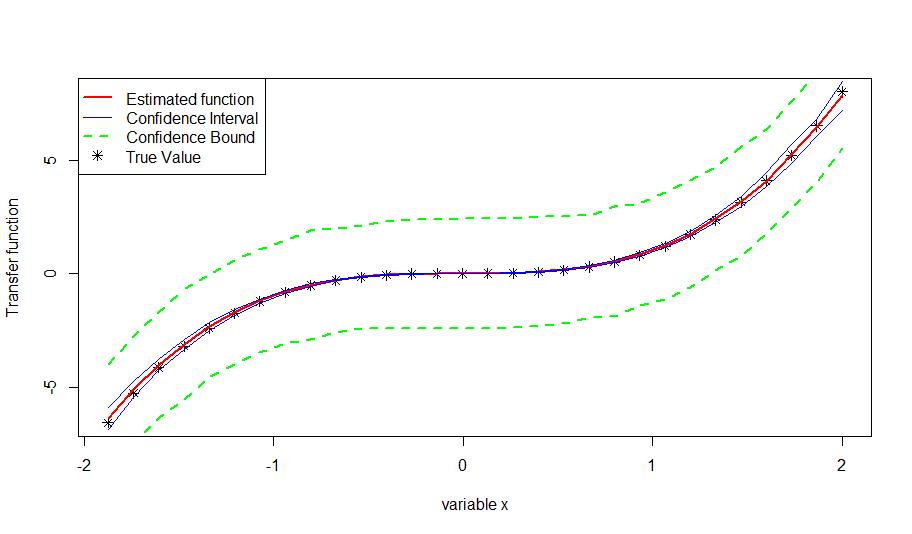}
  \caption{Estimator of $g(x)=x^3$, sample size is 20000 and confidence level is 0.99}\label{True_x_3}
\end{figure}
\label{Ex_1}
\end{example}

\begin{example}[Testing for equivalence of transfer function]
In this example, we examine finite sample performance of test under different $g(x)$ and different perturbation. We suppose sample size is $n_1=1000,\ n_2=10000$ and $h(x)=$ 1) $(x+4)^2$, 2) $\log(x+5)$ and 3) $e^x$. Also, we suppose the underlying $g(x)$ satisfies: 1) $g(x)=h(x)$, 2) $g(x)=h(x)+\frac{x}{n^{1/8}}$, 3) $g(x)=h(x)+\frac{x}{\sqrt{n}}$. We suppose $H_0: g(x)=h(x)$, perform test for 200 times and calculate the ratio of correct tests (that is, for assumption 1, test should accept $H_0$ to avoid first kind error and for assumption 2 and 3, test should reject $H_0$ to avoid second kind error). Confidence level is set as 0.85. The result is demonstrated in table \ref{Test_Sample_Size_1000} and \ref{Test_sample_Size_10000}. From the experiment, when difference of $h(x)$ and underlying $g(x)$ is of $O\left(\frac{1}{\sqrt{n}}\right)$, whether or not the test can separate $h$ and $g$ depends on the form of perturbation and is not strongly related to sample size.
\label{examp5}
\begin{table}
  \centering
  \caption{Ratio of correct test (definition see example \ref{examp5}) under different $g(x)$ and $h(x)$, sample size is 1000, confidence level is 0.85}
  \label{Test_Sample_Size_1000}
  \begin{tabular}{l| l l l}
  \hline\hline
  $h(x)$ & $g(x)=h(x)$ & $g(x)=h(x)+x/n^{1/8}$ & $g(x)=h(x)+x/\sqrt{n}$\\
  \hline
  $(x+4)^2$ & 0.84 & 0.32 & 0.165\\
  \hline
  $\log(x+5)$ & 0.87 & 1.0 & 0.985\\
  \hline
  $e^x$ & 0.91 & 1.0 & 0.46\\
  \hline\hline
  \end{tabular}
\end{table}

\begin{table}
  \centering
  \caption{Ratio of correct test (definition see example \ref{examp5}) under different $g(x)$ and $h(x)$, sample size is 10000 and confidence level is 0.85}
  \label{Test_sample_Size_10000}
  \begin{tabular}{l| l l l}
  \hline\hline
  $h(x)$ & $g(x)=h(x)$ & $g(x)=h(x)+x/n^{1/8}$ & $g(x)=h(x)+x/\sqrt{n}$\\
  \hline
  $(x+4)^2$ & 0.89 & 0.97 & 0.17\\
  \hline
  $\log(x+5)$ & 0.885 & 1.0 & 0.99\\
  \hline
  $e^x$ & 0.895 & 1.0 & 0.49\\
  \hline\hline
  \end{tabular}
\end{table}
\end{example}

\begin{example}[Transfer function estimation with normal MA data]
In this example, we suppose that the $Z_i,\ i=1,2,...,n$ are MA(10) normal data. That is, suppose i.i.d data $\epsilon_i,\ i=1,2,...$ satisfy standard normal distribution $N(0,1)$ and $Z_i=\Sigma_{k=0}^{10}\alpha_{k}\epsilon_{i-k},\ \alpha_0=1$. Notice that, marginal distribution of $Z_i$ is normal distribution $N(0,\Sigma_{k=0}^{10}\alpha_k^2)$. Moving average(k) sequence is strong mixing (definition can be seen in \cite{Strong_Mixing}) since $Z_t$ and $Z_{t+k}$ is independent. Also, it is obvious that condition A2 is satisfied for MA(10) sequence with normal innovation. We will choose coefficients as $\alpha_k=0.90^k$. Similarly as example \ref{Ex_1} $g(x)$ is chosen as 1) $(x+4)^2$, 2)$\log(x+10)$, 3)$x^3$. We choose $b(n)$ in  theorem \ref{The_6} as $n^{4/5}$.
\begin{figure}
  \centering
  \includegraphics[width=12cm]{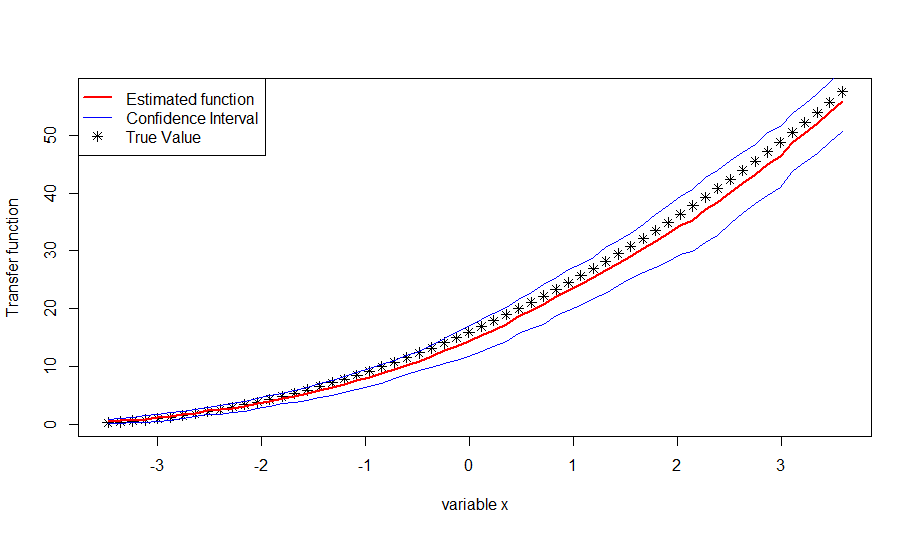}\\
  \caption{Estimator of $g(x)=(x+4)^2$ with dependent data. Sample size is 3000 and confidence level is 0.99}
  \label{Dependent_X_3}
\end{figure}

\begin{figure}
  \centering
  \includegraphics[width=12cm]{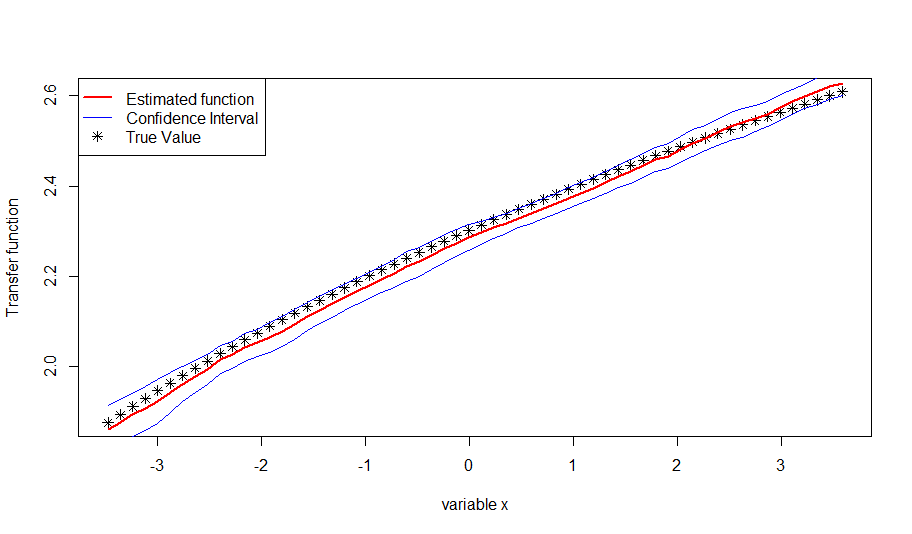}\\
  \caption{Estimator of $g(x)=\log(x+10)$ with dependent data. Sample size is 3000 and confidence level is 0.99}
  \label{Dep_log}
\end{figure}

\begin{figure}
  \centering
  \includegraphics[width=12cm]{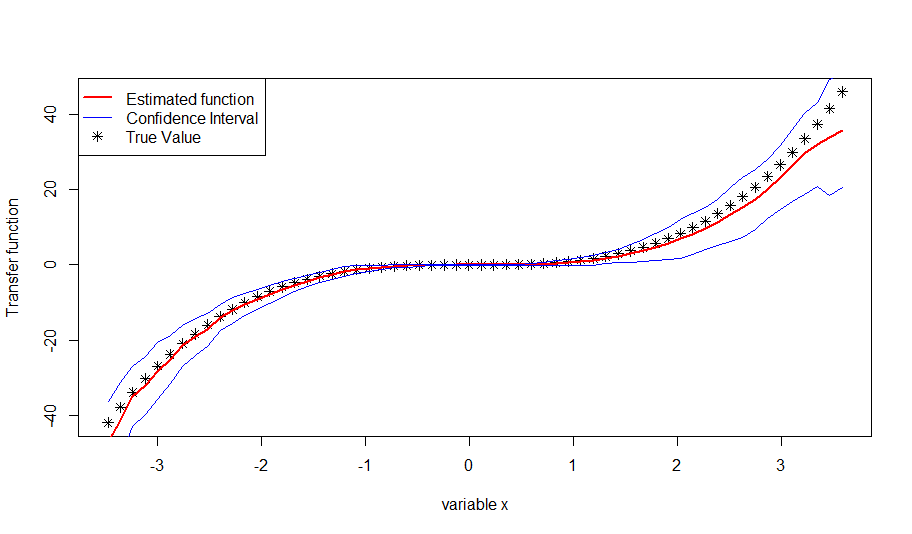}\\
  \caption{Estimator of $g(x)=x^3$ with dependent data. Sample size is 3000 and confidence level is 0.99}
  \label{X_3_dep}
\end{figure}
\end{example}
\subsection{Numerical study on water treatment plant data}
In this section, we will apply our estimator to study relationship between chemical demand of oxygen in input waste water (DQO-E) and the chemical demand of oxygen in water that has passed the primary settler (DQO-D) in a waste water treatment plant\cite{Dua:2017}. This index is always used to quantify amounts of organics in water. Instead of regression model, here we will treat DQO-E in wasted water as a random variable and suppose primary settler as a function $g$ that decreases the concentration of organics in the waste water. Thus, the remaining organics (quantified by DQO-D) is equal to $g(DQO-E)$. Intuitively, heavier the input water is polluted, more organics will be remained after the water being cleaned. Thus, it is safe to assume that $g$ is strictly increasing. Q-Q plot of gamma distribution and DQO-E shows that gamma distribution is a suitable approximation for DQO-E. Through maximum likelihood estimate, shape and scale parameter are estimated as 10.97 and 37.10, so we suppose that DQO-E has gamma distribution $\Gamma(10.97,0.0270)$. Notice that, gamma distribution with shape and rate $\alpha>1$ and $\beta$ has density $\frac{\beta^\alpha}{\Gamma(\alpha)}x^{\alpha-1}\exp(-\beta x)$. Thus, we have
\begin{equation}
\frac{f_Z^{'}(x)}{f_Z(x)}=\frac{\alpha-1}{x}-\beta
\end{equation}
 When $x\to 0$, $f_Z(x)>0$ and thus $F_Z(x)\leq xf_Z(x)$. Thus, as long as
\begin{equation}
\frac{g^{''}(x)x}{g^{'}(x)}=O\left(1\right),\ x\to 0
\label{Real_Cond_1}
\end{equation}
condition B3 is satisfied when $x\to 0$. On the other hand, notice that, as $x$ being large
\begin{equation}
\begin{aligned}
\frac{1-F_Z(x)}{f_Z(x)}=x\int_{0}^1(z+1)^{\alpha-1}\exp(-\beta xz)dz+x\int_{1}^\infty (z+1)^{\alpha-1}\exp(-\beta xz)dz\\
\leq x\int_{0}^1 2^{\alpha-1}\exp(-\beta xz)dz+2^{\alpha-1}\int_{0}^\infty\frac{z^{\alpha-1}}{x^{\alpha-1}}\exp(-\beta z)dz\\
=\frac{2^{\alpha-1}}{\beta}(1-\exp(-\beta x))+\frac{2^{\alpha-1}}{\beta^\alpha x^{\alpha-1}}\mathbf{\Gamma}(\alpha)
\end{aligned}
\end{equation}
Here, $\mathbf{\Gamma}(\alpha)$ is gamma function and since $\alpha>0$, gamma function converges absolutely. Thus, as long as 
\begin{equation}
\frac{g^{''}(x)}{g^{'}(x)}=O\left(1\right),\ x\to\infty
\label{Real_Cond_2}
\end{equation}
condition B3 is satisfied as $x\to\infty$. We suppose transfer function $g$ in the example satisfies condition \ref{Real_Cond_1} and \ref{Real_Cond_2}.

 We apply the test introduced in theorem \ref{Test_Null} to test whether gamma distribution suits DQO-E data or not(that is, suppose DQO-E is a function $h$ of a $\Gamma(10.97,0.0270)$ random variable and test $h(x)=x$). In order to avoid bias introduced by estimated shape and scale parameters, we use Monte Carlo method discussed by Julian and Peter \cite{doi:10.1093/biomet/76.4.633} to calculate p-value. The result is demonstrated in table \ref{Test_Nom}. Figure \ref{Real_Para} demonstrates the relations between DQO-E and DQO-D. Slope of $g$ will decrease as input demand of oxygen in waste water increases, so we can make conclusion that primary settler is efficient in cleaning organics when there is high concentration of organic matters in waste water.
\label{water_tre}
\begin{table}
  \centering
  \caption{Test for fitting gamma distribution of chemical demand of oxygen in input waste water}
  \label{Test_Nom}
  \begin{tabular}{l| l l}
  \hline\hline
  Null assumption & statistics & P-value\\
  \hline
  $h(x)=x$ &0.768 & 0.546\\
  \hline\hline
  \end{tabular}
\end{table}

\begin{figure}
  \centering
  \includegraphics[width=12cm]{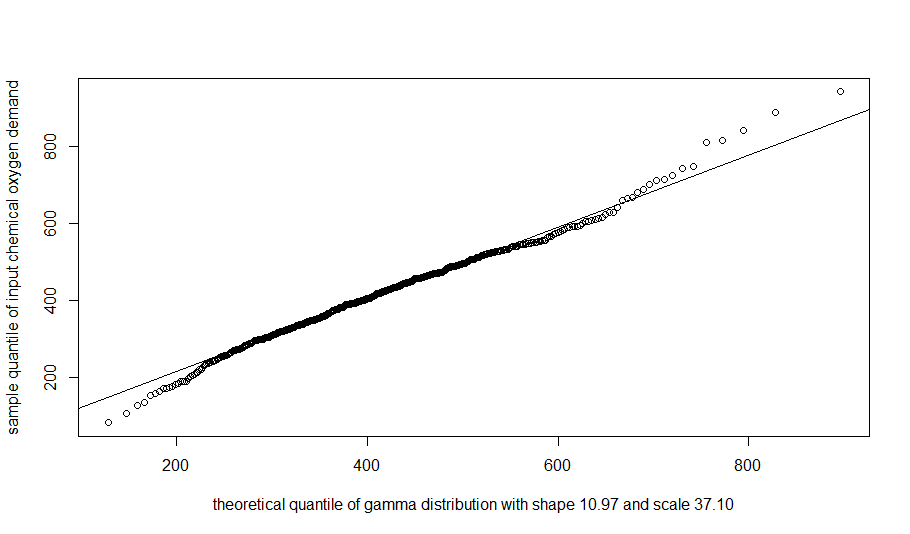}
  \caption{Q-Q plot for chemical demand of oxygen in input waste water}
  \label{QQ}
\end{figure}

\begin{figure}
  \centering
  \includegraphics[width=12cm]{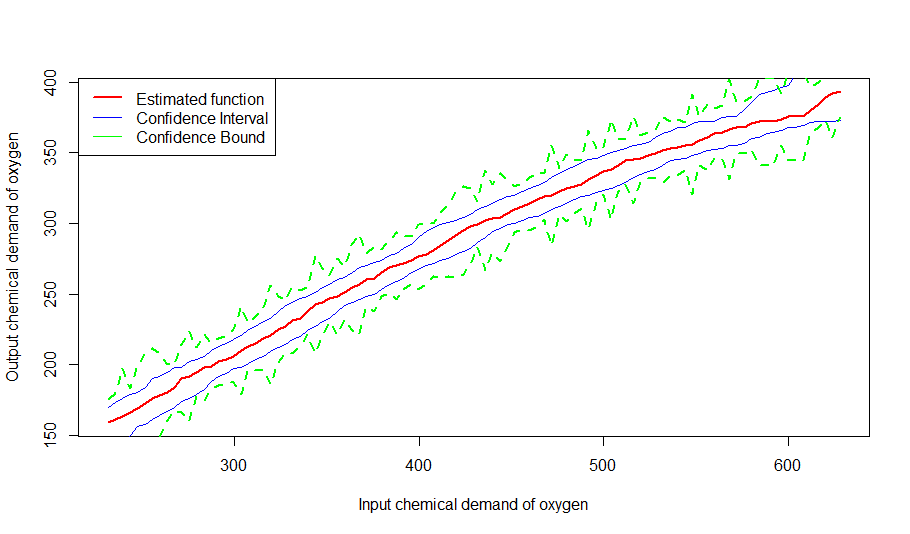}\\
  \caption{Relation between DQO-E and DQO-D (definition see \ref{water_tre}), sample size is 518 and confidence level is 0.99}
  \label{Real_Para}
\end{figure}

\section{Conclusion}
In this paper, we focus on model $Y_i=g(Z_i),\ i=1,2,...$ with $Z_i$ being random variables with known distribution and $g(x)$ being unknown strictly monotonic function. We try to estimate $g(x)$ in this model. For i.i.d data, we propose estimator of $g(x)$ and construct point-wise confidence intervals as well as confidence bands. For short-range dependent data, we prove the consistency of the proposed estimator and use a resampling method to create confidence intervals. Moreover, a goodness of fit test for correctness of $g(x)$ is presented and an alternative of this test is discussed as well.

In numerical part, we study finite sample performance of estimator and test for different $g(x)$ and alternatives. width of confidence bands are sensitive with $g^{'}(x)$. If $g^{'}(x)$ is close to 0, then confidence bands will be much wider than point-wise confidence intervals and if $g^{'}$ is relatively large, then confidence bounds will be close to confidence intervals. On the contrary, small derivative of $g$ will not severely affect point-wise confidence intervals.

In reality, this model can be applied to study relations between input signals with known distribution and responses with unknown distribution, such as correspondence between quality of materials and quality of products, electricity signals with white noises and power of motors, significance of a symptom and concentration of toxic materials in the atmosphere, etc.
\bibliographystyle{unsrt}
\bibliography{Reference_Note}
\end{document}